\newtheorem{theorem}{Theorem}[section]
\newtheorem{lemma}[theorem]{Lemma}
\newtheorem{corollary}[theorem]{Corollary}
\newtheorem{proposition}[theorem]{Proposition}
\newtheorem{definition}[theorem]{Definition}
\theoremstyle{definition}
\newtheorem{example}[theorem]{Example}
\newtheorem{remark}[theorem]{Remark}
\numberwithin{table}{section}
\numberwithin{equation}{section}
\begin{document}
\title{Polynomial as a new variable -  a Banach algebra  with a functional calculus} 
\author{ Olavi Nevanlinna }
%\date{}                                           % Activate to display a given date or no date
\maketitle

 \begin{center}
{\footnotesize\em 
Aalto University\\
Department of Mathematics and Systems Analysis\\
 email: Olavi.Nevanlinna\symbol{'100}aalto.fi\\[3pt]
}
\end{center}
 
\begin{abstract}
Given any square matrix or a bounded  operator $A$ in a Hilbert space   such that $p(A)$ is normal (or similar to normal),  we construct a Banach algebra, depending on the polynomial $p$,  for  which a simple functional calculus holds.  When the polynomial  is of degree $d$, then  the algebra deals with continuous  $\mathbb C^d$-valued functions, defined on the spectrum of  $p(A)$. In particular,  the calculus provides a natural approach to deal with nontrivial Jordan blocks and  one does not need  differentiability at such eigenvalues.

\end{abstract}
\bigskip
 
 \section{Introduction}
 
There are many situations in which it  would be desirable to be able to treat  polynomials as  new {\it global} variables.  For example, by Hilbert's lemniscate theorem (see e.g [12])  polynomials can be used to map complicated sets of the complex plane onto discs.  As polynomials are not one-to-one we represent scalar functions in the original variable by a vector valued function in the polynomial.  This leads to {\it multicentric holomorphic calculus} [9].  In [10] we applied it to generalize the von Neumann theorem on contractions in Hilbert spaces.   In such applications one would, given a bounded operator $A$, search for a polynomial $p$ such that  $p(A)$ has a small norm - thus mapping a potentially complicated spectrum into a small disc.

In this paper we study multicentric calculus without assuming the functions to be analytic.  As an application  we consider  situations in which $p(A)$ is diagonalizable  or similar to normal.  Thus, the aim is to remove the Jordan blocks by moving from $A$ to $p(A)$.  To illustrate the goal consider  finite dimensional matrices. If $D ={\rm diag}\{\alpha_j\}$ is a diagonal matrix  and $\varphi$ is a continuous function, then  any reasonable functional calculus satisfies  $\varphi(D)= {\rm diag} \{\varphi(\alpha_j)\}$. Further,  if $A$ is diagonalizable so that with a similarity $T$ we have  $A=T DT^{-1}$,  then we of course set
 \begin{equation}
 \varphi(A)= T \varphi(D)T^{-1}.
 \end{equation}
 However, if $A$ has an eigenvalue with a nontrivial Jordan block, then  the customary approach is to assume that  $\varphi$ is smooth enough at the  eigenvalues so that the off-diagonal elements can be represented by derivatives of $\varphi$. For example, if
\begin{equation}\label{Jordan block}
 J=\begin{pmatrix}
\alpha & 1 &   \\
 & \alpha   & 1 \\
 & &  \alpha 
\end{pmatrix}
\end{equation}
then
\begin{equation}\label{varJordan}
 \varphi(J)=\begin{pmatrix}
\varphi(\alpha) & \varphi'(\alpha) & \frac{1}{2} \varphi''(\alpha)  \\
 & \varphi(\alpha) & \varphi'(\alpha)\\
 & &  \varphi(\alpha)
\end{pmatrix}.
\end{equation}
A collection of  different ways to define $\varphi(A)$ for matrices  can be found  from  [11],   where Higham, following  Gantmacher [6],   says that  a function $\varphi$ is {\it defined  at the spectrum $\sigma(A) =  \{\alpha_j\}$ if the  values  $\varphi^{(k)}(\alpha_j)$ are known for $0\le k \le n_j$}, where $n_j+1$ are the  powers in the minimal polynomial.  

This has two obvious drawbacks.  First, since it is based on  the Jordan form the  functional calculus  is discontinuous:   for diagonalizable matrices it is given for all continuous functions while it requires existence of derivatives  at eigenvalues with nontrivial Jordan blocks. 
Second, the approach cannot conveniently be extended to infinite dimensional spaces. Recall that there is a natural functional calculus for normal operators which easily extends to operators which are similar to normal. If, however,  an eigenvalue with    a  nontrivial Jordan block would exist in the middle of  a cluster of other eigenvalues, then  one would need to have a way to treat function classes which  are continuous  and additionally have derivatives at that particular eigenvalue.

We shall show in this paper that there is a simple way to parametrize continuous functions  which slow down at those places where some extra smoothness is needed.  And it turns out  that this allows a functional calculus which agrees with the holomorphic functional calculus if applied to holomorphic functions  but  is defined for functions which do not need to be differentiable at any point.

The starting point for the calculus is taking $w=p(z)$ as a new variable.  Since such a  change of variable  is only locally injective we compensate this  by replacing  the scalar function 
$$
\varphi :  z \mapsto \varphi(z) \in \mathbb C
$$
by a vector valued function
$$
f : w \mapsto f(w) \in \mathbb C^d
$$
where $d$ is the degree of the   polynomial $p$.      The {\it multicentric representation of} $\varphi$  is then of the form
\begin{equation}\label{mrep}
\varphi(z) = \sum_{j=1}^d \delta_j(z) f_j(p(z)), 
\end{equation}
where $\delta_j$'s are the Lagrange interpolation polynomials such that
$\delta_j(\lambda_j)=1$ while $\delta_j(\lambda_k)=0$  when $k\not=j,$ [9].

If now $p(A)$ is diagonalizable, one can apply  the  known functional calculus to represent $f_j(p(A))$. But since $\delta_j$'s are polynomials, $\delta_j(A)$ is well defined and differentiability of $\varphi$ is not needed.

The paper is organized as follows.   We first consider the Banach space of continuous functions $f$:  $M\rightarrow \mathbb C^d$ and associate with it a product ,  "polyproduct" $\circledcirc$,  such that it becomes a Banach algebra, which we denote by $C_\Lambda(M)$. Here  $\Lambda$ denotes the set of zeros of the polynomial $p$.  Then the functions $\varphi$ in (\ref{mrep})  can be viewed as Gelfand transformations $\hat f$ of functions  $f\in C_\Lambda(M)$.
Towards the end of the paper we discuss the functional calculus for operators in  Hilbert spaces $H$ such that $p(A)$ is similar to a normal operator. In particular we study  the mapping 
$\chi_A$
which associates to $f$ a  bounded operator  $\chi_A (f) \in \mathcal B(H)$
$$
\chi_A(f) = \sum_{j=1}^d \delta_j(A) f_j(p(A))
$$
and show that we get a homomorphism 
$
\chi_A(f \circledcirc g) = \chi_A(f) \chi_A(g)
$
 which, in an appropriate quotient algebra, satisfies a spectral mapping theorem.

%LUKU 2

\section{Construction of the Banach algebra}

\subsection{Multicentric representation of functions}

We assume given a  polynomial $p(z)=(z-\lambda_1)\cdots (z-\lambda_d) $ with distinct zeros $\Lambda= \{\lambda_j\}_{j=1}^d$ mapping the $z$-plane  onto $w$-plane:  $w=p(z)$.  In addition we denote by $\Lambda_1 =\{ z \ :  p'(z)=0\}$ the set of critical points of $p$.  We call the points of $\Lambda$ as the local centers of the  multicentric calculus. Recall that by the Gauss-Lucas theorem $\Lambda_1$ is in the convex hull of $\Lambda$.

Suppose  $\delta_j(z)$ are the  Lagrange interpolation polynomials with interpolation points in $\Lambda$ so that 
$$
\delta_j(z) = \frac{p(z)}{p'(\lambda_j)(z-\lambda_j)} = \prod_{k \not=j}\frac{z-\lambda_k}{\lambda_j - \lambda_k}.
$$
Assume then that we are given  a  function $f$ mapping a compact $M\subset \mathbb C$ into $\mathbb C^d$.  It determines a unique function  $\varphi$ on $K=p^{-1}(M)$ if we set
$$
\varphi(z) = \sum_{j=1}^d \delta_j(z) f_j(p(z)) {\text { for } } z\in K.
$$
We say that $\varphi$ is given on $K$ by  a {\it  multicentric representation}  and denote it in short
$$
\varphi = \mathcal L f.
$$
In the reverse direction, suppose we are given a scalar function $\varphi$ on a set $K_0$.  Then a necessary condition for $f$ to be determined uniquely is that $K_0$ is {\it balanced w.r.t. } $\Lambda$ in the following sense:  $K_0 = p^{-1}(p(K_0))$. We shall assume throughout that $K_0\subset K=p^{-1}(M)$ is such that $p(K_0)=M$.

Assuming  that $K$ is balanced and contains no critical points, then the function $f$ is pointwisely uniquely determined by the values of $\varphi$.  In order to write down a formula we agree about some additional notation.
 Denote the roots of $p(z)-w=0$ by $z_j=z_j(w)$. Away from critical values these are analytic and we assume a fixed numbering so that $z_j(w)\rightarrow \lambda_j$   if $z_1(w) \rightarrow \lambda_1$ (when  $w\rightarrow 0$). 
 In the inversion we  essentially exchange  interpolation and evaluation points.  To that end let $\delta_j(\zeta; w)$ denote the interpolation polynomial, with $w$ fixed, which takes the value 1 at $\zeta=z_j(w)$ while vanishing at other $z_k(w)$'s:
\begin{equation}\label{kaanteismuoto}
\delta_j(\zeta;w)= \frac{p(\zeta)-w}{p'(z_j(w))(\zeta-z_j(w))},
\end{equation}
so that in particular $\delta_j(\zeta;0)=\delta_j(\zeta).$

\begin{proposition}\label{kaanteispropo}Suppose $K$ is a balanced compact set with respect to local centers $\Lambda$.  Assume that $\varphi$ is given pointwisely in  $K$.  Then $f$ is uniquely defined for all noncritical values $w\in M \setminus p(\Lambda_1)$   by
 \begin{equation}\label{INV}
f_k(w)= \sum_{j=1}^d \delta_j(\lambda_k;w)  \varphi(z_j(w)).
\end{equation} 
The functions $f_k$ inherit the smoothness of $\varphi$, and additionally, if $\lambda_c \in \Lambda_1$  is an interior point of $K$ and $\varphi$ is at that point analytic, then the singularities  of each $f_k$  at the critical value $p(\lambda_c)$ are removable.  

\end{proposition}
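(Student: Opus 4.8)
\noindent\emph{Proof idea.} The plan is to fix a noncritical value $w$ and turn the statement into finite-dimensional linear algebra, then read off regularity from the analyticity of the branches $z_j(w)$. Since $K$ is balanced, all $d$ distinct roots $z_1(w),\dots,z_d(w)$ of $p(\zeta)-w$ lie in $K$, so evaluating $\varphi=\mathcal L f$ at each of them and using $p(z_i(w))=w$ produces the square system $\varphi(z_i(w))=\sum_j\delta_j(z_i(w))\,f_j(w)$. I would invert it explicitly by showing that the matrix with entries $\delta_j(\lambda_k;w)$ is inverse to the matrix with entries $\delta_j(z_i(w))$. This is a pure interpolation identity: each $\delta_\ell$ has degree $\le d-1$, hence is reproduced exactly by Lagrange interpolation at the nodes $z_1(w),\dots,z_d(w)$, i.e. $\delta_\ell(\zeta)=\sum_j\delta_\ell(z_j(w))\,\delta_j(\zeta;w)$, and setting $\zeta=\lambda_k$ with $\delta_\ell(\lambda_k)=\delta_{\ell k}$ gives $\sum_j\delta_j(\lambda_k;w)\,\delta_\ell(z_j(w))=\delta_{k\ell}$. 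Reading this both ways yields at once that the system is uniquely solvable, that the solution is \eqref{INV}, and that the $f$ defined by \eqref{INV} does satisfy $\mathcal L f=\varphi$ on every noncritical fibre. (Relabelling the branches merely permutes the summands of \eqref{INV}, so $f_k$ is well defined with no global choice of branch.)

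For the regularity claim I would work locally near a noncritical $w_0$, where each $z_j$ is holomorphic by the implicit function theorem ($p'(z_j(w_0))\ne 0$) and each coefficient $\delta_j(\lambda_k;w)$ — best written in the product form $\prod_{i\ne j}\bigl(\lambda_k-z_i(w)\bigr)/\bigl(z_j(w)-z_i(w)\bigr)$, so that the apparent $0/0$ at $w=0$ disappears — is holomorphic in $w$. Then $f_k(w)=\sum_j\delta_j(\lambda_k;w)\,\varphi(z_j(w))$ is obtained from $\varphi$ by composition with holomorphic maps and multiplication by holomorphic coefficients, hence inherits exactly the local regularity of $\varphi$ ($C^m$, $C^\infty$, real-analytic, or holomorphic).

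The removable-singularity statement is where the work lies. Let $\lambda_c\in\Lambda_1$ be interior to $K$, put $w_c=p(\lambda_c)$, and let $m$ be the multiplicity of $\lambda_c$ as a root of $p(\zeta)-w_c$; for simplicity take $\lambda_c$ to be the only critical preimage of $w_c$. Choose a small circle $\gamma$ about $\lambda_c$ inside a disc on which $\varphi$ is holomorphic — this is the one place the interior-point hypothesis is used. By Rouch\'e, for $w$ near $w_c$ exactly $m$ of the roots $z_j(w)$ lie inside $\gamma$ and the remaining $d-m$ stay in a fixed compact set off $\gamma$, so I split $f_k=S_{\mathrm{in}}+S_{\mathrm{out}}$ accordingly. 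The outer part involves only non-confluent, holomorphically varying branches, hence is holomorphic near $w_c$ by the previous step. For the inner part I would use $1/p'(z_j(w))=\operatorname{Res}_{\zeta=z_j(w)}1/(p(\zeta)-w)$ to write it as a residue sum and then as the contour integral
\[
S_{\mathrm{in}}(w)=\frac{p(\lambda_k)-w}{2\pi i}\oint_\gamma\frac{\varphi(\zeta)}{(p(\zeta)-w)(\lambda_k-\zeta)}\,d\zeta
\]
(plus an additive $\varphi(\lambda_c)$ when $k=c$, from the residue at $\zeta=\lambda_c$ now enclosed by $\gamma$). On $\gamma$ one has $p(\zeta)-w_c\ne 0$ and $\lambda_k-\zeta\ne 0$, so for $w$ close to $w_c$ the integrand is holomorphic in $w$ and one may differentiate under the integral sign; hence $S_{\mathrm{in}}$, and therefore $f_k$, extends holomorphically across $w_c$.

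The crux, and the step I expect to fight with, is precisely this rewriting. Each branch $z_j(w)$ has an algebraic branch point at $w_c$, so no single summand of $f_k$ is regular there; what rescues the argument is that the combination of the confluent branches occurring in $f_k$ is a symmetric function of those roots, hence single-valued, and that it is representable by a contour integral whose contour can be pushed off the branch point $\lambda_c$ — which is possible exactly because $\varphi$ is holomorphic on a full neighbourhood of $\lambda_c$. If several critical points lie over $w_c$, the same computation is carried out cluster by cluster, provided $\varphi$ is holomorphic near each of them.
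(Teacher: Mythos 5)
Your argument is correct, and it is in fact more than the paper offers: the paper's ``proof'' of Proposition \ref{kaanteispropo} is only a pointer to the discussions in [9] and [10], so there is no in-text argument to compare against. Your route is a legitimate self-contained one, and its two ingredients are exactly the right ones. The inversion step via the interpolation identity $\sum_j\delta_j(\lambda_k;w)\,\delta_\ell(z_j(w))=\delta_{k\ell}$ (degree $\le d-1$ polynomials are reproduced by Lagrange interpolation at the $d$ distinct nodes $z_j(w)$, which exist precisely because $w$ is noncritical and lie in $K$ because $K$ is balanced) cleanly gives both uniqueness and the formula (\ref{INV}); the removability step via $1/p'(z_j(w))=\operatorname{Res}_{\zeta=z_j(w)}(p(\zeta)-w)^{-1}$ and a contour integral over a circle $\gamma$ around $\lambda_c$, holomorphic in $w$ near $p(\lambda_c)$, is the standard mechanism behind the cited references. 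Three small points to tidy up. First, since the zeros $\Lambda$ are simple, $p'(\lambda_k)\neq0$, so no $\lambda_k$ is ever a critical point; hence the parenthetical ``extra $\varphi(\lambda_c)$ when $k=c$'' concerns a case that cannot occur, and by shrinking $\gamma$ you can always keep every $\lambda_k$ outside it, so the pole of $1/(\lambda_k-\zeta)$ never enters. Second, the outer sum $S_{\mathrm{out}}$ need not be holomorphic near $w_c$: the non-confluent branches land at other preimages of $w_c$ where $\varphi$ is only assumed continuous, so $S_{\mathrm{out}}$ merely inherits the regularity of $\varphi$ there --- which is all that is needed, since it contributes no singularity at $w_c$. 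Third, your closing caveat (several critical points over the same critical value must be treated cluster by cluster, each requiring analyticity of $\varphi$ nearby) is a genuine sharpening of the statement as printed, which tacitly assumes this; keeping it explicit is the right call.
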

\begin{proof}
See the discussions in [9] and [10].

\end{proof}
So, we could use the expression
$
f= \mathcal L^{-1} \varphi
$
at least when the components of $f$ are determined by (\ref{INV}) for noncritical values $w$ provided  $\varphi$ is given in a balanced set.  In particular this is natural when $\varphi$ is analytic in a balanced domain.  However, the topic of this paper is in functions which are perhaps given only on discrete sets, such as the set of eigenvalues of a matrix and then  some extra care is needed  in considering the possible lack of injectivity of  $\mathcal L$.  We shall therefore build a Banach algebra and view $\mathcal L$ as performing the Gelfand transformation $\hat f= \mathcal L f$. We then get many  general properties of Gelfand transform to be transported into our situation with relatively small amount of work.

 \subsection{Multiplication of  the vector functions:  polyproduct}

 Consider now continuous  functions $f$ mapping $M$ into $\mathbb C^d$.  We are aiming to define a Banach algebra structure into $C(M)^d$.  Denoting by $\circledcirc $ the multiplication in $C(M)^d$ we then want that $\mathcal L$ takes the vector functions into scalar functions in such a way that  
  $\mathcal L$ becomes an algebra homomorphism 
$$
 \mathcal L ( f \circledcirc g)= (\mathcal L f)( \mathcal L g)
$$
where the multiplication  of scalar functions $\mathcal L f$  is pointwise.

Since $\sum_{j=1}^d \delta_j(z) =1$ the  constant vector ${\bf 1}=(1,\dots,1)^t \in \mathbb C^d$ serves as the unit in the algebra.   In order to  define $ f \circledcirc g$ we hence need to code  the differences between components of $f$. 

%definition of product

\begin{definition}
For $a\in \mathbb C^d$ we set 
$$
\Box: \ \     a \mapsto { \Box a} =\left( \begin{matrix} 0& a_1-a_2&\dots & a_1-a_d\\
a_2-a_1& 0 & \dots & a_2-a_d\\
\dots&\dots& \dots&\dots \\
a_d-a_1 & \dots & a_d-a_{d-1}  & 0
\end{matrix}\right)
$$
and call it {\it boxing} the vector  $a$.
\end{definition}
In order to define the product we still need to introduce  two "scaling" entities, matrix $L$ and vector $\ell$.
 The matrix $L$ has zero diagonal  and 
$
L_{ij}=  {1} / ({\lambda_i - \lambda_j} )$  for $i\not=j$,
while the vector $\ell\in \mathbb C^d$  has components $\ell_j=1/p'(\lambda_j)$.
 Now, denoting by $\circ$ the Hadamard (or Schur,  elementwise) product
we can define the product  as follows.

\begin{definition}
Let $f$ and $g$ be pointwisely defined functions  from $M \subset \mathbb C$ into $\mathbb C^d$.  Then their "polyproduct"  $f\circledcirc g$ is a function defined on $M$, taking values in $\mathbb C^d$ such that
$$
(f\circledcirc g)(w) = (f\circ g)(w) - w\ ( L\circ {\Box {f(w)}}\circ {\Box{g(w)}} \ ) \ell.
$$

\end{definition}

 \begin{remark}
We shall write this in short, with slight abuse of notation, as
$$
f\circledcirc g = f\circ g - w\ ( L\circ \Box f\circ \Box{g} \ ) \ell.
$$
Further,  for the powers we write  $f^n = f\circledcirc f^{n-1}$ and the inverse in particular as $f^{-1}$ whenever it exists:  $f\circledcirc f^{-1}=\bf 1$.
\end{remark}

 \begin{proposition} The  vector space of functions 
$$
f \ : \ M\subset \mathbb C \rightarrow \mathbb C^d
$$
equipped  with the product $\circledcirc$ becomes a complex commutative algebra with ${\bf 1}$ as the unit.
\end{proposition}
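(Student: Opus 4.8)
The plan is to verify the algebra axioms directly from the defining formula $f\circledcirc g = f\circ g - w\,(L\circ\Box f\circ\Box g)\ell$, treating the value at each fixed $w\in M$ separately so that everything reduces to a statement about a bilinear operation on $\mathbb C^d$. Bilinearity of $\circledcirc$ over $\mathbb C$ is immediate: the Hadamard product $f\circ g$ is bilinear, the map $a\mapsto\Box a$ is linear, and $(B,C)\mapsto(L\circ B\circ C)\ell$ is bilinear in the matrices $B,C$; hence the distributive laws and compatibility with scalar multiplication are free, and closure under $\circledcirc$ holds pointwise. Commutativity is equally quick: $f\circ g=g\circ f$ since Hadamard multiplication is commutative, and $\Box f\circ\Box g=\Box g\circ\Box f$ for the same reason, so the correction term is symmetric in $f$ and $g$. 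Continuity is not asserted in this proposition (it speaks only of pointwisely defined functions), so no estimate is needed here; one only checks that $f\circledcirc g$ is again a function $M\to\mathbb C^d$, which is clear.

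Next I would check that ${\bf 1}=(1,\dots,1)^t$ is a two-sided unit. Here the key observation is that $\Box{\bf 1}=0$, since every entry $1-1$ vanishes; therefore $L\circ\Box{\bf 1}\circ\Box f=0$ and the correction term disappears, leaving $f\circledcirc{\bf 1}=f\circ{\bf 1}=f$. Commutativity then gives ${\bf 1}\circledcirc f=f$ as well.

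The substantive step, and the one I expect to be the main obstacle, is associativity: $(f\circledcirc g)\circledcirc h = f\circledcirc(g\circledcirc h)$ for all $f,g,h$. Expanding both sides at a fixed $w$ produces, besides the common term $f\circ g\circ h$, several terms linear in $w$ and one term quadratic in $w$; the claim is that these match. The natural route is a conceptual one rather than brute expansion: since $\mathcal L$ is intended to be an algebra homomorphism onto the commutative algebra of scalar functions under pointwise multiplication, one should show that for $w$ not a critical value the map $f(w)\mapsto(\mathcal L f)(z_1(w)),\dots,(\mathcal L f)(z_d(w))\bigr)$, equivalently the evaluation of $\mathcal L f$ at the $d$ preimages of $w$, is a linear isomorphism $\mathbb C^d\to\mathbb C^d$ that carries $\circledcirc_w$ to the coordinatewise product on $\mathbb C^d$. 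Associativity of $\circledcirc_w$ then follows from associativity of coordinatewise multiplication, for all $w$ outside a finite set, and since both sides of the associativity identity are polynomial (indeed at most quadratic) in $w$ with coefficients built polynomially from the entries of $f,g,h$, an identity holding for all but finitely many $w$ holds identically; a continuity/density argument in $w$ also suffices if one prefers. To make the isomorphism claim precise one uses the Lagrange structure: $\sum_j\delta_j\equiv 1$, the identities defining $L$ and $\ell$ in terms of $p'(\lambda_j)$ and $\lambda_i-\lambda_j$, and the relation $\delta_j(\zeta;w)$ from (\ref{kaanteismuoto}) linking the interpolation data at the centers $\Lambda$ to that at the sheets $z_j(w)$; these are exactly the ingredients that were assembled in Proposition~\ref{kaanteispropo} and in the cited references [9], [10]. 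I would therefore organize the proof as: (i) record bilinearity and commutativity; (ii) record the unit; (iii) prove the pointwise diagonalization lemma for noncritical $w$, deduce associativity there, and extend to all $w$ by polynomiality in $w$.
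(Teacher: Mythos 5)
Your proposal is correct, and it is actually more thorough than the paper's own proof. The paper verifies only what your steps (i) and (ii) contain --- bilinearity via linearity of $\Box$, commutativity, and the unit property from $\Box{\bf 1}=0$ --- and then simply declares these ``enough for the structure to be an algebra''; associativity of $\circledcirc$ is never checked explicitly there. Your step (iii) is therefore the genuine difference: you prove associativity by showing that, for noncritical $w$, the evaluation $a\mapsto\bigl(\hat a(z_1(w)),\dots,\hat a(z_d(w))\bigr)$ is a linear isomorphism of $\mathbb C^d$ carrying $\circledcirc_w$ to the coordinatewise product, and then extend to critical $w$ by polynomiality in $w$. This is sound and not circular: the multiplicativity of that evaluation map is exactly the content of the theorem (via Lemma \ref{tulonkonsistenssi}) which the paper proves immediately after this proposition without using associativity, and invertibility at noncritical $w$ is just invertibility of Lagrange interpolation at $d$ distinct nodes, so your argument amounts to reordering the paper's development and then exploiting it. One point you should state more carefully: in the passage from ``all but finitely many $w$'' to all $w$, you must let $w$ range over $\mathbb C$ (the formula for $\circledcirc_w$ with fixed vectors $a=f(w_0)$, $b=g(w_0)$, $c=h(w_0)$ is a polynomial identity in a free variable $w$), not merely over $M$; since $M$ may be finite --- e.g.\ the spectrum of a matrix, which is the intended application --- neither ``all but finitely many $w\in M$'' nor a continuity-in-$w$ argument inside $M$ would suffice on its own. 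With that reading, your route buys an explicit verification of the associativity axiom that the later Banach-algebra and Gelfand-theoretic statements in fact require, whereas the paper's route buys brevity by leaving that axiom implicit.
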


\begin{proof}
In addition to the obvious   properties of scalar multiplication and summation of vectors we observe  that the  vector product is  commutative
$$
f \circledcirc g= g \circledcirc f
$$
and since $\Box {\bf 1}=0$ we have
$
{\bf 1}\circledcirc f =f.
$
Further, since $\Box( {\alpha f + \beta g})= \alpha \Box f + \beta \Box g,
$
we get 
$$
(\alpha f+ \beta g) \circledcirc h = \alpha (f\circledcirc h) + \beta (g \circledcirc h).
$$
These are enough for the structure to be an algebra.
\end{proof}
%TASSA OLLAAN 17.5.

\begin{theorem}  Let $f$ and $g$ be defined in $M$ and $K= p^{-1}(M)$.  Then  if  $\varphi$ and $\psi$ are functions defined on $K$    by
$
\varphi = \mathcal L f $ and  $ \ \psi= \mathcal L g ,
$
then $\varphi \psi$  is given by
$$
\varphi \psi  = \mathcal L ( f \circledcirc g).
$$
\end{theorem}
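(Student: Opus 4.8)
The plan is to reduce the claim to elementary identities among the Lagrange polynomials $\delta_j$ and then expand both sides. Fix $z\in K$ and put $w=p(z)$; then $f_i(w)$, $g_k(w)$ and $\delta_i(z)$ are just scalars, so it suffices to verify the equality pointwise in $z$. Writing out the two multicentric representations,
$$
\varphi(z)\psi(z)=\Bigl(\sum_{i=1}^d\delta_i(z)f_i(w)\Bigr)\Bigl(\sum_{k=1}^d\delta_k(z)g_k(w)\Bigr)=\sum_{i,k=1}^d\delta_i(z)\delta_k(z)\,f_i(w)g_k(w),
$$
so everything comes down to re-expressing the products $\delta_i(z)\delta_k(z)$ — a priori polynomials of degree $2(d-1)$ — in terms of $p(z)$ and the $\delta_j(z)$ themselves.

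The main step, and the place where the particular shapes of $L$ and $\ell$ enter, is the bilinear product formula
$$
\delta_i(z)\delta_k(z)=p(z)\bigl(L_{ik}\ell_k\,\delta_i(z)+L_{ki}\ell_i\,\delta_k(z)\bigr),\qquad i\neq k.
$$
I would prove this by partial fractions: $\delta_i(z)\delta_k(z)=\dfrac{p(z)^2}{p'(\lambda_i)p'(\lambda_k)(z-\lambda_i)(z-\lambda_k)}$, split $\dfrac1{(z-\lambda_i)(z-\lambda_k)}=\dfrac1{\lambda_i-\lambda_k}\bigl(\dfrac1{z-\lambda_i}-\dfrac1{z-\lambda_k}\bigr)$, use $\dfrac{p(z)}{z-\lambda_j}=p'(\lambda_j)\delta_j(z)$, and recall $L_{ik}=1/(\lambda_i-\lambda_k)$, $\ell_j=1/p'(\lambda_j)$. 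Together with the partition of unity $\sum_j\delta_j(z)=1$ noted above, this also handles the diagonal terms, since $\delta_i(z)^2=\delta_i(z)-\sum_{k\neq i}\delta_i(z)\delta_k(z)$.

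It then remains to substitute these relations into the double sum and collect. Splitting off the diagonal, relabelling $i\leftrightarrow k$ in each term that carries $\delta_k$, and gathering, the off-diagonal pieces combine into $-p(z)\sum_{i\neq k}L_{ik}\ell_k\,\delta_i(z)\,(f_i(w)-f_k(w))(g_i(w)-g_k(w))$, so that
$$
\varphi(z)\psi(z)=\sum_{i=1}^d\delta_i(z)\Bigl[f_i(w)g_i(w)-w\sum_{k\neq i}L_{ik}\ell_k\,(f_i(w)-f_k(w))(g_i(w)-g_k(w))\Bigr].
$$
The bracket is exactly the $i$-th component of $(f\circledcirc g)(w)$: indeed $(\Box f)_{ik}=f_i(w)-f_k(w)$, $(\Box g)_{ik}=g_i(w)-g_k(w)$, their diagonals vanish so the $k=i$ term drops out of $\bigl(L\circ\Box f\circ\Box g\bigr)\ell$, and the scalar $w$ is our $p(z)$. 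Hence $\varphi\psi=\mathcal L(f\circledcirc g)$ on $K$.

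The only genuine work is the bilinear product formula and then keeping the indices straight in the reorganization; there is no analytic input, and in particular one need not assume $w$ noncritical or the roots $z_j(w)$ distinct. One could alternatively observe that over a fibre $p^{-1}(w)$ with $w$ a noncritical value the map $\mathcal L$ is the inverse of the matrix occurring in the inversion formula (\ref{INV}), so that $\circledcirc$ is just the transport of the pointwise product on that fibre; but the direct computation is shorter and avoids the genericity assumption.
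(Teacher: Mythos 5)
Your proof is correct and follows essentially the same route as the paper: the key identity $\delta_i\delta_k = w(\sigma_{ik}\delta_i+\sigma_{ki}\delta_k)$ for $i\neq k$ (with $\sigma_{ik}=L_{ik}\ell_k$), the partition of unity $\sum_j\delta_j=1$ for the diagonal terms, and the same reorganization of the double sum. The only cosmetic difference is that you derive the product identity by partial fractions and $p(z)/(z-\lambda_j)=p'(\lambda_j)\delta_j(z)$, whereas the paper expands $\delta_j/(z-\lambda_i)$ in the Lagrange basis; these amount to the same elementary computation.
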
 

\begin{proof}

When we multilply $\varphi$ and $\psi$  products $\delta_i\delta_j$ appear.  For writing the expressions in a simple form we introduce
\begin{equation}
\sigma_{ij}= \frac{1}{p'(\lambda_j)} \frac{1}{\lambda_i-\lambda_j}.
\end{equation}

\begin{lemma}\label{tulonkonsistenssi}
We have with $w=p(z)$
\begin{equation}\label{samaindex}
\delta_i^2(z)= \delta_i(z) - w \sum_{j\not=i} [\sigma_{ij} \delta_i(z) +\sigma_{ji} \delta_j(z)]
\end{equation}
while for $i\not=j$
\begin{equation}\label{eriindex}
\delta_i(z)\delta_j (z)=  w\   [\sigma_{ij} \delta_i(z) +\sigma_{ji} \delta_j(z)].
\end{equation}
\end{lemma}
{\it Proof of the lemma.} 
Let first $i\not=j$. Since $$\delta_i = \frac{p}{p'(\lambda_i)(z-\lambda_i)}$$ and $p(z)=w$ we can write
$$
\delta_i \delta_j = \frac{w}{p'(\lambda_i)} \frac{\delta_j}{z-\lambda_i}.
$$
But
$
{\delta_j}/{(z-\lambda_i)}$ is a polynomial of degree $d-2$ and can thus be written as a linear combination in these basis polynomials. This gives
$$
\frac{\delta_j}{z-\lambda_i} = \frac{1}{\lambda_j -\lambda_i} \delta_j  + \frac{p'(\lambda_i)}{p'(\lambda_j)(\lambda_i-\lambda_j)} \delta_i
$$
which then yields (\ref{eriindex}).

Consider then (\ref{samaindex}). Since $\sum_j \delta_j =1$  we can write $\delta_i = 1-\sum_{j\not=i} \delta_j$ to get
$$
\delta_i^2 = \delta_i - \sum_{j\not=i} \delta_i \delta_j,
$$
which,  with the help of (\ref{eriindex}), yields the claim and completes the { proof of the lemma.}

 \smallskip

We can now multiply the expressions for $\varphi$ and $\psi$. 
\begin{align}
\varphi \psi&= \sum_{i,j} \delta_if_i \delta_jg_j\\
&= \sum_{i} \delta_i^2 f_i g_i + \sum_i \sum_{j\not=i} \delta_i\delta_j f_i g_j\\
&=\sum_i\delta_i f_ig_i  - w \sum_{i}\sum_{j\not=i}[\sigma_{ij} \delta_i(z) +\sigma_{ji} \delta_j(z)]f_ig_i\\
&+ w \sum_i\sum_{j\not=i} [\sigma_{ij} \delta_i(z) +\sigma_{ji} \delta_j(z)]
f_i g_j.
\end{align}
Here the term multiplying $\delta_k$  appears in the form
$$
\delta_k f_k g_k -w \sum_{j\not=k} \sigma_{kj} (f_k-f_j)(g_k-g_j)
$$
and hence the whole expression reads
$$
\varphi \psi = \sum_{i} \delta_i  \ [ f_ig_i - w \sum_{j\not=i}\sigma_{ij} (f_i-f_j)(g_i-g_j)].
$$
This is easily seen to be of  the form  $\varphi\psi= \mathcal L( f\circledcirc g)$ which completes the proof of the theorem.
 
\end{proof}

\subsection{The norm in the algebra}

We shall be considering continuous functions $f$  from a compact $M\subset \mathbb C$ into $\mathbb C^d$ and  begin with the uniform norm
$
|f|_M = \max_{w\in M}  |f(w)|_\infty
$
where $|a|_\infty= \max_{1\le j\le d}|a_j|$.
 The definition of  polyproduct  makes this into an algebra, but $|\cdot|_M$ is not an algebra norm in general,  so we need to move into the "operator norm".
 
 \begin{definition} 
 For $f\in C(M)^d$ we set
 $$
\| f\| = \sup_{|g|_M \le 1} | f \circledcirc g |_M.
$$
\end{definition}
This is clearly a norm in $C(M)^d$ and it is in fact equivalent with $|\cdot|_M$.

\begin{proposition}\label{epayhtaloitanormeille}
There is a constant $C$, only depending on $M$ and on $\Lambda$ such that
\begin{equation}\label{prodcont}
\|f \circledcirc g\| \le \|f\| \|g\|
\end{equation}
\begin{equation}
|f|_M \le \|f\|  \le C |f|_M.
\end{equation}
 
\end{proposition}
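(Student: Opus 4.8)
The plan is to establish the three inequalities in order, since they build on one another. First I would verify $|f|_M \le \|f\|$: this is immediate by testing the supremum defining $\|f\|$ against the constant function $g = \mathbf 1$, because $\mathbf 1 \circledcirc f = f$ and $|\mathbf 1|_M = 1$, so $\|f\| \ge |f \circledcirc \mathbf 1|_M = |f|_M$. Second, the submultiplicativity $\|f \circledcirc g\| \le \|f\|\,\|g\|$ follows formally once $\circledcirc$ is known to be associative and commutative (which it is, being the transported pointwise product under the algebra isomorphism $\mathcal L$, at least on balanced sets; more directly one can check associativity from the explicit componentwise formula $(f\circledcirc g)_i = f_i g_i - w\sum_{j\ne i}\sigma_{ij}(f_i-f_j)(g_i-g_j)$ derived in the proof of the previous theorem): for $|h|_M \le 1$ one writes $(f\circledcirc g)\circledcirc h = f \circledcirc (g\circledcirc h)$ and estimates $|(f\circledcirc g)\circledcirc h|_M = |f \circledcirc (g \circledcirc h)|_M \le \|f\|\,|g\circledcirc h|_M \le \|f\|\,\|g\|\,|h|_M \le \|f\|\,\|g\|$, then take the supremum over such $h$.

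The substantive content is the upper bound $\|f\| \le C\,|f|_M$ with $C$ depending only on $M$ and $\Lambda$. Here I would return to the defining formula for the polyproduct,
$$
(f\circledcirc g)(w) = (f\circ g)(w) - w\,\bigl(L\circ \Box f(w)\circ \Box g(w)\bigr)\ell,
$$
and estimate the $\infty$-norm of the right-hand side pointwise in $w\in M$. The first term contributes $|f(w)\circ g(w)|_\infty \le |f|_M\,|g|_M \le |f|_M$ when $|g|_M \le 1$. For the second term, note $|w| \le R := \max_{w\in M}|w|$, the entries of $\Box f(w)$ are differences $f_i(w)-f_j(w)$, each bounded by $2|f|_M$, the entries of $\Box g(w)$ are likewise bounded by $2|g|_M \le 2$, and the entries of $L$ and $\ell$ are fixed constants determined by $\Lambda$ alone. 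Carrying out the matrix–vector product, the $k$-th component of $\bigl(L\circ \Box f(w)\circ \Box g(w)\bigr)\ell$ is $\sum_{j\ne k} L_{kj}\,(f_k(w)-f_j(w))(g_k(w)-g_j(w))\,\ell_j$, whose modulus is at most $4\,|f|_M\sum_{j\ne k}|L_{kj}|\,|\ell_j|$. Setting $C_\Lambda := 4\max_k \sum_{j\ne k}|L_{kj}|\,|\ell_j| = 4\max_k\sum_{j\ne k}|\sigma_{kj}|$, we obtain $|(f\circledcirc g)(w)|_\infty \le (1 + R\,C_\Lambda)\,|f|_M$ for all $w\in M$ whenever $|g|_M\le 1$, hence $\|f\| \le (1 + R\,C_\Lambda)\,|f|_M$; so one may take $C = 1 + R\,C_\Lambda$, which depends only on $M$ (through $R$) and on $\Lambda$ (through the $\sigma_{kj}$). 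This also records, as a by-product, that $\circledcirc$ maps $C(M)^d\times C(M)^d$ into $C(M)^d$ continuously in the uniform norm, which is what makes $C(M)^d$ a Banach algebra once completeness (inherited from $C(M)^d$ with $|\cdot|_M$, via the equivalence of norms just proved) is noted.

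The only real subtlety is the bookkeeping in the matrix–vector estimate and making sure the constant is extracted correctly in terms of $\Lambda$ alone; there is no analytic obstacle. One point worth stating carefully is that submultiplicativity of $\|\cdot\|$ requires associativity of $\circledcirc$, so if the paper has not yet proven associativity in full generality (the earlier Proposition only asserts the algebra axioms that do not need it, and the homomorphism theorem gives associativity on balanced sets via $\mathcal L$), I would include a short remark deriving associativity directly from the componentwise formula above, since that formula exhibits each component of $f\circledcirc g$ as a genuine commutative–associative combination — indeed it is exactly the pushforward of pointwise multiplication of the scalars $\varphi(z_i(w))$ under the fixed linear change of coordinates $f \mapsto (\varphi(z_i(w)))_i$ — so associativity is automatic.
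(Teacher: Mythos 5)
Your proof is correct and follows essentially the same route as the paper: the lower bound by testing against $\mathbf 1$, the upper bound $\|f\|\le C|f|_M$ by estimating the polyproduct formula pointwise in $w$, and submultiplicativity via the three-factor estimate $|(f\circledcirc g)\circledcirc h|_M\le \|f\|\,\|g\|\,|h|_M$; your explicit constant $C=1+R\,C_\Lambda$ simply spells out what the paper leaves as ``clear from the definition.'' Your aside on associativity is a fair point (the paper uses it silently in the same step), though the pushforward argument only works verbatim at noncritical $w$, where the matrix $(\delta_j(z_i(w)))$ is invertible; at critical values one should note that $(a\circledcirc b)\circledcirc c-a\circledcirc(b\circledcirc c)$ is, for fixed vectors, a polynomial in $w$ vanishing off the finitely many critical values, hence identically zero.
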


\begin{proof}
In fact
$$
|f|_M =|f \circledcirc {\bf 1} |_M \le \|f\|.
$$
On the other hand, from the definition of the polyproduct it is clear that there exists a constant $C$ such that
$$
 | f \circledcirc g|_M  \le C  |f|_M \  |g|_M.
 $$
But then 
$$
\| f\| = \sup_{|g|_M \le 1} | f \circledcirc g |_M \le C |f|_M.
$$
Finally, 
$$
|f\circledcirc g \circledcirc h|_M  \le \|f\| \  | g\circledcirc h|_M \le \|f\|?? \|g\| ? |h|_M
$$
implies (\ref{prodcont}).

\end{proof}
Since the polyproduct $\circledcirc$ is uniquely determined by $\Lambda$, we  shall denote the algebra in short as $C_\Lambda(M)$.

\begin{definition}
The vector space $C(M)^d$ of continuous functions $f$  from a compact $M\subset \mathbb C $ into $\mathbb C^d$,  with the operator norm $\|f\|$ and product $\circledcirc$ is denoted by
$C_\Lambda(M)$.
\end{definition}

The discussion can be summarized as follows.

\begin{theorem}  The Banach space $C(M)^d$ equipped with polyproduct $\circledcirc$, and denoted by 
 $C_\Lambda(M)$, is a commutative unital Banach algebra.  The algebra-norm $\| \cdot \| $  is equivalent with  $| \cdot |_M$ and functions with components given by polynomials $p(w,\overline w)$ are dense in $C_\Lambda(M)$.
\end{theorem}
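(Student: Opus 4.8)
The statement is essentially a summary of what has been assembled, so the plan is to bolt it together from the propositions already proved plus two standard facts: a soft completeness argument and the Stone--Weierstrass theorem.

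First I would record that the purely algebraic content --- commutativity of $\circledcirc$, bilinearity, and the fact that ${\bf 1}$ is a unit --- is exactly the proposition preceding the norm discussion, while submultiplicativity $\|f\circledcirc g\|\le\|f\|\,\|g\|$ together with the equivalence $|f|_M\le\|f\|\le C|f|_M$ is Proposition \ref{epayhtaloitanormeille}. Hence the only thing still missing for ``Banach algebra'' is completeness of $(C(M)^d,\|\cdot\|)$, and this comes for free: two equivalent norms have the same Cauchy sequences and the same limits, convergence in $|\cdot|_M$ is coordinatewise uniform convergence, and $C(M)$ is complete; therefore $(C(M)^d,|\cdot|_M)$ and so $(C(M)^d,\|\cdot\|)$ are complete. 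I would also note $\|{\bf 1}\|=\sup_{|g|_M\le1}|{\bf 1}\circledcirc g|_M=\sup_{|g|_M\le1}|g|_M=1$, so the unit is correctly normalized. This settles the first two assertions of the theorem.

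For the density claim I would invoke the complex Stone--Weierstrass theorem. Viewing $M$ as a compact subset of $\mathbb R^2$, the set $\mathcal P$ of finite sums $\sum c_{mn}\,w^m\,\overline w^{\,n}$ is a unital subalgebra of $C(M)$ that separates points (already the single function $w$ does) and is closed under complex conjugation, hence uniformly dense in $C(M)$. Applying this to each of the $d$ components shows that the functions $f=(f_1,\dots,f_d)$ with all $f_j\in\mathcal P$ are $|\cdot|_M$-dense in $C(M)^d$, and by the norm equivalence above they are dense in $C_\Lambda(M)$. If desired, one can add the easy remark that these functions even form a subalgebra, since when $f,g$ have components in $\mathcal P$ so do $f\circ g$ and $w\,(L\circ\Box f\circ\Box g)\,\ell$ (the latter being $w$ times polynomials in $w,\overline w$), hence so does $f\circledcirc g$.

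There is no genuine obstacle here; the points worth flagging are, first, that the density must be asserted with polynomials in \emph{both} $w$ and $\overline w$ --- for a general compact $M$ the holomorphic polynomials in $w$ alone are not dense in $C(M)$, and it is precisely closure under conjugation that makes Stone--Weierstrass available --- and second, that the completeness argument is legitimate only because the equivalence of $\|\cdot\|$ with $|\cdot|_M$ in Proposition \ref{epayhtaloitanormeille} is invoked first.
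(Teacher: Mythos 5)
Your proposal is correct and follows essentially the same route as the paper: the algebraic structure and the norm equivalence are taken from the earlier propositions, and the density is obtained by applying the Stone--Weierstrass theorem (for polynomials in $w$ and $\overline w$) componentwise. The paper's own proof is just a terser version of this, leaving the completeness and unit-norm remarks implicit, so your extra details are fine but not a different argument.
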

\begin{proof}
Recall that polynomials $p(w,\overline w)$ are dense in the sup-norm on a compact  $M\subset \mathbb C$ among continuous functions by  Stone-Weierstrass theorem.  Applying this on each component of functions $f\in C_\Lambda(M)$ gives the result.
\end{proof}

%KAIKKI KARAKTEERIT

\subsection{Characters of $C_\Lambda(M)$}

 In order to be able to apply the Gelfand theory  we need to know all characters in the algebra $C_\Lambda (M)$.
\begin{definition}
A   nontrivial linear functional $\chi: C_\Lambda(M) \rightarrow \mathbb C$ is  called a character if it is additionally multiplicative:
$$ \chi(f\circledcirc g)=\chi(f)\chi(g).$$  The set of all characters is the character space, which we denote here by $\mathcal X $.
\end{definition}

\begin{remark}
In commutative unital Banach algebras all characters - complex homomorphisms -  are automatically bounded and of norm 1.
Since  maximal ideals are kernels of characters, the focus is sometimes on the maximal ideals rather than on the characters,  [1], [2], [3], [13].

\end{remark}

Because the polyproduct $\circledcirc$ is constructed to yield 
$
\mathcal L(f\circledcirc g)= \mathcal L f \mathcal L g,
$
we conclude immediately that for each fixed $z_0\in p^{-1}(M)$ the functional
\begin{equation}\label{karakteeri}
\chi_{z_0} : f\mapsto \sum_{j=1}^d \delta_j(z_0) f_j(p(z_0))
\end{equation}
is a character.  We show next that there are no others.

\begin{theorem}\label{kaikkikarakteerit}
The character space $\mathcal X$ is  
$$
\mathcal X=\{\chi_z : z\in p^{-1}(M) \}
$$
where $\chi_z$ is given in  (\ref{karakteeri}).   

\end{theorem}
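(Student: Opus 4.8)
The strategy is the standard one for identifying the maximal ideal space of a function algebra: we already know the inclusion $\{\chi_z : z \in p^{-1}(M)\} \subseteq \mathcal X$ from the remark preceding the theorem, so the real content is the reverse inclusion --- every character arises by evaluation at some point of $p^{-1}(M)$. The plan is to fix an arbitrary character $\chi \in \mathcal X$ and extract from it a point $z_0 \in p^{-1}(M)$ such that $\chi = \chi_{z_0}$. The natural candidates for the "coordinates" of $z_0$ are the values $\chi$ takes on a small set of distinguished elements of $C_\Lambda(M)$, and the first task is to choose those test functions wisely.

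First I would identify a set of generators. The obvious choice is the constant vector functions together with the "identity" function $e(w) = (w,\dots,w)^t$ --- or, better adapted to the multicentric picture, the functions $\eta_k$ defined componentwise by $(\eta_k)_j(w) = \delta_{jk}$ (a constant vector, the $k$-th standard basis vector), whose Gelfand transform is $\mathcal L \eta_k = \delta_k$, and the function $e$ with $\mathcal L e = p(z)$. From the relation $\sum_j \delta_j = 1$ and Lemma~\ref{tulonkonsistenssi} one sees that these, together with the function $w \mapsto \overline w$ (needed because $C(M)$ requires conjugates by Stone--Weierstrass), generate $C_\Lambda(M)$ as a Banach algebra, since the last theorem of Section~2.3 says polynomials in $w,\overline w$ applied componentwise are dense. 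So a character is determined by its values on these generators, and I would set $\mu = \chi(e) \in \mathbb C$ and $c_k = \chi(\eta_k)$, then show $(c_k)$ and $\mu$ must come from a single point.

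The key algebraic steps: apply $\chi$ to the boxing/consistency identities. Since $\eta_k \circ \eta_k = \eta_k$ as a Hadamard product and $\Box \eta_k$ is a fixed matrix, the polyproduct formula gives $\eta_k \circledcirc \eta_k = \eta_k - e \cdot(\text{explicit linear combination of the }\eta_j)$, which is exactly the Gelfand-side identity (\ref{samaindex}) pulled back; applying $\chi$ yields $c_k^2 = c_k - \mu\sum_{j\neq k}(\sigma_{kj}c_k + \sigma_{jk}c_j)$, and similarly $c_i c_j = \mu(\sigma_{ij}c_i + \sigma_{ji}c_j)$ for $i\neq j$, and $\sum_k c_k = 1$ from $\sum_k \eta_k = \mathbf 1$. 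These are precisely the equations saying that the scalar polynomial $q(z) := \sum_k c_k \delta_k(z)$ satisfies $q(z)^2 = q(\text{something})$... more precisely, that $\sum_k c_k\delta_k$ behaves multiplicatively, i.e. there is a point $z_0$ with $p(z_0) = \mu$ and $\delta_k(z_0) = c_k$ for all $k$. The cleanest way to nail this down: the system forces $c_k = \delta_k(z_0)$ where $z_0$ is a root of $p(z) - \mu = 0$; one shows $\mu \in M$ because $\chi$ has norm $1$ so $|\mu| = |\chi(e)| \le \|e\| $ and in fact $\mu$ lies in the spectrum of $e$ in the algebra, which by the norm equivalence is contained in $\{w : w \in M\}$, hence $\mu \in M$ and $z_0 \in p^{-1}(M)$. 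Then $\chi$ and $\chi_{z_0}$ agree on all generators, hence everywhere by density and continuity.

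I expect the main obstacle to be the step that converts the algebraic identities among the $c_k$ into the geometric statement "there exists a single $z_0 \in p^{-1}(M)$ realizing all of them simultaneously" --- one must rule out the possibility that the $c_k$ solve the equations formally without corresponding to an actual point, and one must correctly locate $\mu$ inside $M$ rather than merely in $\mathbb C$. A convenient route around the first difficulty is to use the characters $\chi_z$ we already have: the map $z \mapsto (\delta_1(z),\dots,\delta_d(z), p(z))$ embeds $p^{-1}(M)$ into $\mathbb C^{d+1}$, its image is a compact algebraic-looking set cut out exactly by the relations (\ref{samaindex})--(\ref{eriindex}) plus $\sum\delta_k = 1$ plus $p(z) = \mu$, and the tuple $(c_1,\dots,c_d,\mu)$ coming from an arbitrary $\chi$ must satisfy all of them; a short argument (using that $p(z)-\mu$ has a root and that the $\delta_k$ separate the roots) then shows the tuple lies in this image, giving the desired $z_0$. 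The rest --- that agreement on a generating set plus continuity forces $\chi = \chi_{z_0}$ --- is routine Banach-algebra bookkeeping.
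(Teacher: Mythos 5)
Your overall skeleton is the same as the paper's: extract from a character $\chi$ a point $w_0=\mu\in M$ and the values $c_k=\chi(\eta_k)$, observe (via Lemma \ref{tulonkonsistenssi}) that these must satisfy the quadratic system (\ref{samaindeksi})--(\ref{eriindeksit}) together with $\sum_k c_k=1$, and then identify every solution with $(\delta_1(z_0),\dots,\delta_d(z_0))$ for some root $z_0$ of $p(z)=\mu$. But the decisive step is exactly the one you leave as an assertion: you say the image of $z\mapsto(\delta_1(z),\dots,\delta_d(z),p(z))$ is ``cut out exactly by the relations'' and that ``a short argument'' shows the tuple $(c_1,\dots,c_d,\mu)$ lies in it. That there are no extraneous solutions of the system is the nontrivial content, and neither ``$p(z)-\mu$ has a root'' nor ``the $\delta_k$ separate the roots'' yields it. The paper spends the second half of its proof on precisely this point: for $\mu\neq 0$ one solves (\ref{eriindeksit}) for $\eta_j$ in terms of $\eta_1$, substitutes into (\ref{samaindeksi}), and obtains a single equation for $\eta_1$ with exactly $d$ solutions counted with multiplicity, which are then matched against the $d$ known values $\delta_1(z_k(\mu))$; the case $\mu=0$ (when the centers themselves are the roots) is treated separately. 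Without this elimination-and-counting argument, or an equivalent one, your proof is not complete.

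There is a second, smaller gap in the reduction itself. Your generating set necessarily includes $\overline w\,{\bf 1}$ (as you note, Stone--Weierstrass needs conjugates), yet you only pin down $\chi(e)=\mu$ via the spectrum of $e$ and never determine $\chi(\overline w\,{\bf 1})$; a character of a Banach algebra need not send $\overline w\,{\bf 1}$ to $\overline\mu$ just because it sends $w{\bf 1}$ to $\mu$, so ``$\chi$ and $\chi_{z_0}$ agree on all generators'' is unjustified as stated. The paper avoids this by restricting $\chi$ to the subalgebra $\{\alpha{\bf 1}:\alpha\in C(M)\}\cong C(M)$ (on which the polyproduct is the pointwise product, since $\Box{\bf 1}=0$), invoking the classical fact that every character of $C(M)$ is evaluation at some $w_0\in M$, and then showing by a short localization argument that $\chi(g)$ depends only on $g(w_0)$; this simultaneously gives $\mu=w_0\in M$, handles the conjugate, and justifies freezing the algebra at $w_0$. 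Your spectrum computation $\sigma(e)=M$ is correct but does not substitute for this step.
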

\begin{proof}
We need to show that all characters are of the form (\ref{karakteeri}).  Let $\chi\in \mathcal X$ be given and apply it within the subalgebra consisting of  elements of the form
$$
f= \alpha {\bf 1},
$$
where $\alpha$ is a  scalar function $\alpha \in C(M)$.  
Now,  it is well known that all  multiplicative functionals in $C(M)$ are given by evaluations at some $w_0\in M$:  $\alpha\mapsto \alpha(w_0)$;  hence $\chi(\alpha {\bf1})= \alpha(w_0)$ for some $w_0\in M$.

Next, take an arbitrary $g\in C_\Lambda(M)$.  Then  we conclude from
$$
\chi(\alpha {\bf 1} \circledcirc g) = \alpha(w_0) \chi(g)
$$
that $\chi(g)$ depends on $g(w_0)$, only. In fact $\chi(\alpha {\bf 1} \circledcirc g)=\chi(\alpha g)= \alpha(w_0) \chi(g)$ and  if $\alpha(w_0)=1$ we have
$$
\chi(g) = \chi(\alpha g)+ \chi((1-\alpha)g)
$$
so that $ \chi((1-\alpha)g)=0$. 

We assume next that $w_0$ is chosen and $\chi$ is a character $f\mapsto \chi(f)$ such that  the value only depends on $f(w_0)$.  We may therefore view $\chi$ as an  arbitrary linear functional in $\mathbb C^d$  which is multiplicative with respect to the polyproduct $\circledcirc$ at $w_0$.   In fact, setting  for $a,b\in \mathbb C^d$
$$
a b = (a\circledcirc b)(w_0)
$$
makes $\mathbb C^d$ into a Banach algebra, for each fixed $w_0$. 

Let   $a,b\in \mathbb C^d$, then $\chi$ is of the form
$$
\chi(a)=\sum_{j=1}^d \eta_j a_j
$$
and we require
$$
\chi((a\circledcirc b)(w_0))= \chi(a) \chi(b).
$$
First observe that $\chi({\bf 1})=1$ gives $\sum_{j=1}^d \eta_j=1$.  Then,  comparing with  Lemma \ref{tulonkonsistenssi} and using the notation in the proof of it,   we see that we must have
\begin{equation}\label{samaindeksi}
\eta_i^2= \eta_i - w_0 \sum_{j\not =i}(\sigma_{ij} \eta_i+\sigma_{ji} \eta_j)
\end{equation} 
while for $j\not=i$
\begin{equation}\label{eriindeksit}
\eta_i\eta_j = w_0 (\sigma_{ij} \eta_i+\sigma_{ji} \eta_j).
\end{equation}

Let first $w_0=0$.  We have then $\eta_i \in \{0,1\}$ and,  since $\sum_i\eta_i=1$, exactly one $\eta_j=1$. But for $w_0= p(\lambda_i)=0$ and thus $\eta_j=\delta_j(\lambda_j)=1$ and there are exactly $d$ different solutions.

For $w_0 \not=0$ we have from  (\ref{eriindeksit}) that $\eta_i\not=0$ for all $i$.  We take $\eta_1$ as an unknown  so that for $j>1$
$$
\eta_j= \frac{w_0 \sigma_{1j}\eta_1}{\eta_1-w_0 \sigma_{j1}}.
$$
Substituting these into (\ref{samaindeksi}) and  dividing with $\eta_1\not=0$ yields 
$$
\eta_1=1-w_0 \sum_{j\not=1} \sigma_{1j} -w_0^2 \sum_{j\not=1} \frac{\sigma_{j1}\sigma_{1j}}{\eta_1-w_0 \sigma_{j1}}.
$$
 This has, counting multiplicities, exactly $d$ solutions for $\eta_1$. However, we already know $d$ solutions, namely, $\delta_1(z_k(w_0))$, for $k=1, \cdots,d$ where $p(z_k(w_0))=w_0$, which completes the proof.
 
\end{proof}

%GELFAND

\subsection{Gelfand transform and the spectrum}

When $\varphi$ is holomorphic it is natural to think $\varphi$ as the "primary" function which is represented or  parametrized by the vector function $f$.  However,  when dealing with functions with less smoothness it is  easier to think their roles to be reversed.  This is because $f$ can be taken as any continuous  vector function while  the behavior of $\varphi$ is in general complicated near critical points. 

We  take $C_\Lambda(M)$ as the {\it defining} algebra while the functions $\varphi$ appear as  {\it Gelfand transforms}.  

Before applying this machinery we recall some basic properties of  Gelfand theory.
Let $\mathcal A$ be a  commutative unital Banach algebra  with unit $e$ and  denote by $h$ a character:
$$
h(ab)=h(a) h(b)  \  \text{  for all  } \ a,b \in \mathcal A.
$$
Let us denote by $\Sigma_\mathcal A$ the character space of  $\mathcal A$.   Then every $h \in \Sigma_\mathcal A$ has norm 1 and  $\Sigma_\mathcal A$ is  compact in the {\it Gelfand topology}:  one gives $\Sigma_\mathcal A$ the relative weak$^*$-topology it has as a subset of the dual  of $\mathcal A$.

Then the {\it Gelfand transform }   of $a \in \mathcal A$ is 
$$
\hat a:  \Sigma_\mathcal A \rightarrow \mathbb C  \  \text{ where } \hat a (h)=h(a).
$$
The function $\hat a$ is then always continuous in the Gelfand topology and this allows one to study the algebra $\mathcal A$ by studying continuous functions on $\Sigma_\mathcal A$.

Since every maximal ideal of $\mathcal A$ is of the form $\mathcal N_h=\{ a\in \mathcal A : h(a)=0\}$, the character space  is sometimes called the maximal ideal space of $\mathcal A$.
We collect here basic facts on the Gelfand theory, and here we treat $\hat a\in C(\Sigma_\mathcal A)$ as a continuous function with the sup-norm.   Recall that the {\it spectrum}  $\sigma(a)$ of an element $a\in \mathcal A$ consists of those $\lambda \in \mathbb C$ for which $\lambda e-a$ does not have an inverse in $\mathcal A$.  We denote by $\rho(a)$ the spectral radius of $a$:  $\rho(a) = \max \{  |\lambda| \ : \lambda \in \sigma(a)\}$.

\begin{theorem}\label{GELFAND}(Gelfand representation theorem)
Let $\mathcal A$ be a commutative unital Banach algebra.  Then for all $a \in \mathcal A$
\begin{itemize} 
\item[(i)]   $\sigma(a) = \hat a(\Sigma_\mathcal A) = \{ \hat a(h) : \  h \in \Sigma_\mathcal A \}$;
\item[(ii)]   $\rho(a)= \|\hat a\|_\infty =\lim_{n\rightarrow \infty} \|a^n\|^{1/n}  \le \|a\|;$
\item[(iii)]    $a\in \mathcal A$ has an inverse if and only if $\hat a(h) \not=0$ for all $h \in \Sigma_\mathcal A$;
\item[(iv)]    {\rm rad} $\mathcal A = \{ a \in \mathcal A : \ \hat a(h)=0$  for all $h \in \Sigma_\mathcal A\}$.

\end{itemize}

\end{theorem}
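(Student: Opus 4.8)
The plan is to derive everything from two classical pillars --- the Gelfand--Mazur theorem and the resolvent calculus --- and then to translate the purely algebraic correspondence between maximal ideals and characters into the four statements about $\hat a$. I may use the facts already recorded before the theorem, namely that $\Sigma_\mathcal A$ is weak$^*$-compact and that each $\hat a$ is weak$^*$-continuous.

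First I would record the Gelfand--Mazur step. For $a\in\mathcal A$ the resolvent $R(\lambda)=(\lambda e-a)^{-1}$ is defined on $\mathbb C\setminus\sigma(a)$, is analytic there (the Neumann series supplies a local power-series expansion), and $\|R(\lambda)\|\to 0$ as $|\lambda|\to\infty$; hence if $\sigma(a)$ were empty, $h\circ R$ would be a bounded entire scalar function for every bounded functional $h$, forcing $R\equiv 0$ by Liouville together with Hahn--Banach, a contradiction. So $\sigma(a)\ne\varnothing$ for every $a$, and applied to a Banach division algebra this yields $\mathcal A=\mathbb C e$. Next I would set up the character/maximal-ideal dictionary: every character $h$ satisfies $h(e)=1$ and is bounded with $\|h\|=1$, since $|h(a)|>\|a\|$ would make $\lambda e-a$ invertible via the Neumann series for $\lambda=h(a)$ while $h(\lambda e-a)=0$; its kernel $\mathcal N_h$ is then a closed maximal ideal. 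Conversely, a maximal ideal $\mathcal M$ is proper, hence closed (the invertible group is open, so the closure of a proper ideal is proper), so $\mathcal A/\mathcal M$ is a unital Banach algebra that is a field; by Gelfand--Mazur it is isometrically $\mathbb C$, and the quotient map is a character with kernel $\mathcal M$. Thus $h\mapsto\mathcal N_h$ is a bijection between $\Sigma_\mathcal A$ and the maximal ideals.

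With this in hand the four items fall out. For (i): $\lambda\in\sigma(a)$ iff $\lambda e-a$ is noninvertible iff $\lambda e-a$ lies in some maximal ideal (in a commutative unital algebra every non-unit generates a proper ideal, contained in a maximal one by Zorn) iff $\lambda e-a\in\mathcal N_h$ for some $h$ iff $\hat a(h)=h(a)=\lambda$ for some $h$; hence $\sigma(a)=\hat a(\Sigma_\mathcal A)$, which is compact since $\hat a$ is continuous on the compact $\Sigma_\mathcal A$. For (iii), specialize $\lambda=0$. For (iv), $\operatorname{rad}\mathcal A$ is by definition the intersection of all maximal ideals, which equals $\bigcap_h\mathcal N_h=\{a:\hat a\equiv 0\}$. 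For (ii), (i) gives $\|\hat a\|_\infty=\sup\{|\lambda|:\lambda\in\sigma(a)\}=\rho(a)$, and the Neumann series gives $\sigma(a)\subset\{|\lambda|\le\|a\|\}$, i.e. $\rho(a)\le\|a\|$. The remaining identity $\rho(a)=\lim_n\|a^n\|^{1/n}$ is the one genuinely analytic point: expanding $R(\lambda)=\sum_{n\ge 0}\lambda^{-n-1}a^n$, valid for $|\lambda|>\rho(a)$ because $R$ is analytic there, gives $\limsup_n\|a^n\|^{1/n}\le\rho(a)$, while the polynomial spectral mapping $\sigma(a^n)=\sigma(a)^n$ gives $\rho(a)^n=\rho(a^n)\le\|a^n\|$, hence $\rho(a)\le\liminf_n\|a^n\|^{1/n}$, and the two bounds force convergence to $\rho(a)$.

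The main obstacle is concentrated in these analytic inputs: proving $\sigma(a)\ne\varnothing$ (equivalently Gelfand--Mazur) and the spectral-radius formula both rest on the vector-valued holomorphy of the resolvent together with Liouville's theorem and a Laurent expansion. Once those are granted, everything else is elementary ideal theory plus the observation that $\hat a$ is weak$^*$-continuous on the compact space $\Sigma_\mathcal A$.
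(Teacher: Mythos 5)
Your proposal is correct and is precisely the standard textbook argument: Gelfand--Mazur via vector-valued holomorphy and Liouville, the bijection between characters and maximal ideals (closedness of maximal ideals from openness of the invertible group), items (i), (iii), (iv) as elementary ideal theory, and the spectral-radius formula from the Laurent expansion of the resolvent plus $\rho(a)^n=\rho(a^n)\le\|a^n\|$. The paper does not prove this theorem at all --- it simply cites the standard references [1], [2], [3], [13] --- so your proposal reproduces the proof those texts contain and there is nothing in the paper's treatment to compare it against or to find missing.
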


(See any text book treating Banach algebras, e.g. [1], [2], [3],  [13]).

%eritysitapaukseen!

We shall now  consider $C_\Lambda(M)$.  In what follows we write $f\circledcirc f^{n-1}=f^n$ and in particular $f^{-1}$  for the inverse of $f$.
Recall that we denote by $\mathcal X$ the  character space of $C_\Lambda(M)$  
$$
\mathcal X=\{ \chi_z:  z\in p^{-1}(M)\}
$$
where
$$
\chi_z (f) = \sum_{j=1}^d \delta_j(z) f_j(p(z)).
$$
This allows us to identify $\chi_z$ with $z$ and consequently $\mathcal X$ with $p^{-1}(M)$. Hence we shall view the Gelfand transform $\hat f$ as a function of $z \in p^{-1}(M)$.

\begin{definition}
Given $f\in C_\Lambda(M)$   we set
 
$$
\hat f: p^{-1}(M) \rightarrow \mathbb C
$$
$$
\hat f: z \mapsto \hat f(z) = \sum_{j=1}^d \delta_j(z) f_j(p(z)).
$$

 \end{definition}
 Thus,  we  can view the multicentric representation operator $\mathcal L$   as  performing the Gelfand transformation
 $$
\mathcal L:   f \mapsto \hat f.
$$
We denote this Gelfand transformation by $\mathcal L$ to remind that  for constant vectors $a\in \mathbb C^d$  the transformation $\hat a$ is just the Lagrange interpolation polynomial (restricted into $p^{-1}(M)$).   We denote $|\hat f|_K=\sup_{z\in K} |\hat f(z)|$.

We  specify now the general Gelfand representation theorem  for the algebra  $C_\Lambda(M)$.

\begin{theorem}\label{peruslause}(Multicentric representation as Gelfand transform)

For $ f\in C_\Lambda (M)$  the following hold with $K=p^{-1}(M)$: 
\begin{itemize}
 \item[(i)]   $\sigma(f) =  \{ \hat f (z): \  z \in K \}$;
\item[(ii)]   $\rho(f)= |\hat f|_K =\lim_{n\rightarrow \infty} \|f^{n}\|^{1/n}  \le \|f\|;$
\item[(iii)]    $f$ has an inverse if and only if $\hat f(z) \not=0 \ $ for all $z\in K$;
\item[(iv)]    {\rm rad} $C_\Lambda(M)= \{ f \in C_\Lambda(M): \ \hat f(z)=0$  for all $z \in K\}$.

\end{itemize}

\end{theorem}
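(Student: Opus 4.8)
The plan is to read off all four statements directly from the general Gelfand representation theorem (Theorem \ref{GELFAND}) applied to $\mathcal A = C_\Lambda(M)$, once the Gelfand space $\Sigma_{\mathcal A}$ has been identified, as a topological space, with the compact set $K = p^{-1}(M)$ carrying its usual topology as a subset of $\mathbb C$. We have already shown that $C_\Lambda(M)$ is a commutative unital Banach algebra, so Theorem \ref{GELFAND} is available, and by Theorem \ref{kaikkikarakteerit} its character space equals $\mathcal X = \{\chi_z : z \in K\}$ with $\chi_z(f) = \sum_{j=1}^d \delta_j(z) f_j(p(z))$. Note also that $K$ is indeed compact: $p$ is a polynomial, hence $|p(z)|\to\infty$ as $|z|\to\infty$, so $K = p^{-1}(M)$ is closed and bounded.

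The one substantive step is to check that the bijection $\kappa : K \to \mathcal X$, $z \mapsto \chi_z$, is a homeomorphism when $\mathcal X$ carries the Gelfand (relative weak$^*$) topology. For injectivity: if $p(z)\ne p(z')$, choose $\alpha\in C(M)$ separating $p(z)$ and $p(z')$ and evaluate at $f=\alpha{\bf 1}$, for which $\chi_z(f)=\alpha(p(z))$; if $p(z)=p(z')=:w$ but $z\ne z'$ (possible only when $d\ge 2$), evaluate at a constant vector $a\in\mathbb C^d$, noting that $\chi_z(a)=\sum_j \delta_j(z)a_j$ ranges over all polynomials of degree $\le d-1$ as $a$ varies, and such polynomials separate the distinct points $z,z'$ (equivalently, the vector $(\delta_1(z),\dots,\delta_d(z))$ determines $z$). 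For continuity of $\kappa$: the weak$^*$ topology on $\mathcal X$ is the coarsest one making every evaluation $\chi\mapsto\chi(f)$ continuous, and $z\mapsto\chi_z(f)=\hat f(z)=\sum_j\delta_j(z)f_j(p(z))$ is continuous on $K$ because the $\delta_j$ are polynomials and $w\mapsto f_j(w)$, $z\mapsto p(z)$ are continuous. Since $K$ is compact and $\mathcal X$ is Hausdorff, the continuous bijection $\kappa$ is automatically a homeomorphism.

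With this identification, $\hat f\in C(\Sigma_{\mathcal A})$ corresponds exactly to the function $z\mapsto\hat f(z)$ on $K$, and the sup-norms agree, $\|\hat f\|_\infty=|\hat f|_K$. Statements (i), (iii), (iv) are then verbatim translations of Theorem \ref{GELFAND}(i), (iii), (iv) (using that the Gelfand transform is multiplicative, so $\widehat{f^{\,n}}=(\hat f)^n$), while (ii) is Theorem \ref{GELFAND}(ii) after substituting $\|\hat f\|_\infty=|\hat f|_K$ together with the general spectral-radius formula $\rho(f)=\lim_{n\to\infty}\|f^{\,n}\|^{1/n}$, valid in any unital Banach algebra.

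I do not expect a genuine obstacle; the only point needing a little care is injectivity of $z\mapsto\chi_z$ in the degenerate case $p(z)=p(z')$ with $z\ne z'$, where one must invoke the fact (already implicit in Proposition \ref{kaanteispropo}) that distinct preimages of a common value are separated by the degree $\le d-1$ Lagrange-type polynomials $z\mapsto\sum_j\delta_j(z)a_j$. Everything else is a mechanical specialization of standard Gelfand theory.
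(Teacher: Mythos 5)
Your proposal is correct and follows essentially the same route as the paper, which presents Theorem \ref{peruslause} as a direct specialization of the general Gelfand representation theorem (Theorem \ref{GELFAND}) once Theorem \ref{kaikkikarakteerit} identifies the character space with $p^{-1}(M)$. Your extra checks (injectivity of $z\mapsto\chi_z$, e.g.\ via the constant vector $a=(\lambda_1,\dots,\lambda_d)^t$ whose transform is $\hat a(z)=z$, and the homeomorphism onto the Gelfand topology) merely make explicit what the paper leaves implicit in the phrase ``this allows us to identify $\chi_z$ with $z$.''
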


  Recall, that an algebra $\mathcal A$ is called {\it semi-simple} if rad $\mathcal A=\{0\}$.

%TASSA TAMMIKUUN 22 2015

\begin{theorem}\label{semisimppelilause}
$C_\Lambda(M)$ is semi-simple if and only if  $M$ contains no isolated critical values of $p$.\end{theorem}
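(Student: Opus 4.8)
The plan is to read the statement through part (iv) of Theorem~\ref{peruslause}, which says that $\mathrm{rad}\, C_\Lambda(M)$ consists precisely of those $f\in C(M)^d$ whose Gelfand transform $\hat f$ vanishes identically on $K=p^{-1}(M)$. Hence $C_\Lambda(M)$ is semi-simple exactly when the multicentric representation map $\mathcal L:f\mapsto\hat f$ is injective on $C(M)^d$, and the theorem amounts to showing that $\mathcal L$ is injective if and only if $M$ has no critical value of $p$ that is an isolated point of $M$. I would establish the two implications separately, after first noting that $K=p^{-1}(M)$ is compact (since a polynomial is proper) and balanced (since $p$ is onto, $p(K)=M$), so that Proposition~\ref{kaanteispropo} is available.

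For the direction ``no isolated critical value $\Rightarrow$ semi-simple'', take $f\in C(M)^d$ with $\hat f\equiv 0$ on $K$. By Proposition~\ref{kaanteispropo}, applied with $\varphi=\mathcal L f=\hat f$, the inversion formula~(\ref{INV}) holds at every noncritical value, so $f_k(w)=\sum_{j=1}^d\delta_j(\lambda_k;w)\,\hat f(z_j(w))=0$ for all $k$ and all $w\in M\setminus p(\Lambda_1)$. Thus $f$ vanishes on $M\setminus p(\Lambda_1)$. Since $p(\Lambda_1)$ is a finite set, any $w_0\in M\cap p(\Lambda_1)$ that is not isolated in $M$ is a limit of points of $M\setminus p(\Lambda_1)$, so continuity of $f$ forces $f(w_0)=0$ there too; under the hypothesis this covers every critical value in $M$, whence $f\equiv 0$.

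For the converse I would argue by contraposition. Suppose $w_0\in M$ is a critical value of $p$ that is an isolated point of $M$. Because $w_0$ is a critical value, $p(z)-w_0$ has a repeated root, so the fiber $p^{-1}(w_0)=\{z_1,\dots,z_m\}$ has $m\le d-1$ points. The homogeneous linear system $\sum_{j=1}^d\delta_j(z_i)\,a_j=0$, $i=1,\dots,m$, in the unknown $a=(a_1,\dots,a_d)\in\mathbb C^d$ therefore has fewer equations than unknowns and admits a nonzero solution $a$. Define $f:M\to\mathbb C^d$ by $f(w_0)=a$ and $f(w)=0$ for $w\in M\setminus\{w_0\}$; since $\{w_0\}$ is open and closed in $M$, $f$ is continuous, so $f\in C_\Lambda(M)$ and $f\ne 0$. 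For $z\in K$ with $p(z)\ne w_0$ we get $\hat f(z)=\sum_j\delta_j(z)f_j(p(z))=0$ because $f(p(z))=0$, and for $z\in p^{-1}(w_0)$ we get $\hat f(z)=\sum_j\delta_j(z)a_j=0$ by the choice of $a$. Hence $\hat f\equiv 0$ on $K$, so $f\in\mathrm{rad}\,C_\Lambda(M)\setminus\{0\}$ and $C_\Lambda(M)$ is not semi-simple.

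I expect this theorem to be fairly short given the machinery; the only point needing care is in the ``$\Rightarrow$'' direction. One must (i) notice that the correct reading of ``isolated critical value in $M$'' is ``a critical value which is an isolated point of $M$'' — for a noncritical isolated point the fiber has full size $d$ and the Lagrange evaluation map on the fiber is injective, leaving no room for a nonzero $a$, so such a point does not obstruct semi-simplicity; and (ii) verify that over a critical value the fiber is strictly smaller than $d$, which is exactly what frees up the nontrivial $a$, and that the resulting $f$ is genuinely continuous (nothing is lost at neighboring points precisely because $\{w_0\}$ is relatively open). The ``$\Leftarrow$'' direction is essentially bookkeeping: combine the pointwise inversion of Proposition~\ref{kaanteispropo} on the dense-in-the-relevant-sense set of noncritical values with continuity of $f$.
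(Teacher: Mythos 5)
Your proof is correct and follows essentially the same route as the paper: for one direction you invert via Proposition~\ref{kaanteispropo} on noncritical values and use continuity, and for the other you exploit that the fiber over an isolated critical value has fewer than $d$ points, so the Lagrange evaluation system has a nontrivial solution $a$, giving a nonzero radical element supported at that isolated point. Your write-up is somewhat more explicit than the paper's (notably in constructing the function supported on $\{w_0\}$ and checking its continuity), but there is no substantive difference in method.
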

\begin{proof}
Take $f\in {\rm rad}(C_\Lambda(M))$ so that  $\hat f(z)=0$ for all $z\in K$.  Since $\hat f$ determines $f$ uniquely outside critical values,  we have $f(w)=0$ away from the critical values.  If every critical value is an accumulation point of $M$ then by continuity $f$ vanishes everywhere.  On the other hand, if $w_0\in M$ is an isolated critical value, take  critical points $z_i\in p^{-1}(\{w_0\})$.  By assumption, they are isolated and  all we need to do is to find $0\not= a\in \mathbb C^d$ such that  $\sum_j \delta_j(z_i) a_j=0$ for all $i$. However,  since $w_0$ is a critical value at least two of the roots $z_i$ coincide   and hence the matrix $(\delta_j(z_i))_{ij}$ is not of full rank.  So, we conclude that nontrivial  solutions $f$ exist and $C_\Lambda(M) $ is not semi-simple.

\end{proof}
 
\begin{remark}
If $s_A$ is a simplifying polynomial of minimal degree for an $n \times n$ matrix $A$ (see Definition \ref{simply}), then {\it all} critical values of $s_A$ are isolated and inside $\sigma(s_A(A))$.
\end{remark}

\subsection{Invertible elements  of $C_\Lambda (M)$}
 %muokkaa ekaa lausetta ja jatka...
From Theorem \ref{peruslause} conclude that if $\varphi$ is given by multicentric representation 
$\varphi=\mathcal L f$ where $f$ is continuous and bounded, then $1/\varphi= \mathcal L g$ with a bounded and continuous $g$ if and only if $\varphi(z)\not=0$ for $z\in p^{-1}(M)$.
We  shall now derive a quantitative version of this.   

%TASTA

\begin{theorem}\label{invertible}   There exists a constant $C$  depending on $M$ and $\Lambda$ such that the following holds.   If $f\in C_\Lambda(M)$ is  such that for all $z \in p^{-1}(M)$  
$$
|\mathcal L f(z)| \ge \eta > 0,
$$
then there exists $g\in C_\Lambda(M)$ such that $f\circledcirc g = {\bf 1} $ and
\begin{equation}\label{estimalhaalta}
\|g\| \le C \  \frac{\|f\|^{d-1}}{\eta^d}.
\end{equation}
\end{theorem}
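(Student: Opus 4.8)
The plan is to reduce the problem to a fibrewise linear-algebra estimate and then apply Cramer's rule, with the hypothesis $|\mathcal L f|\ge\eta$ used to bound a determinant from below. Fix $w\in M$ and recall from the proof of Theorem \ref{kaikkikarakteerit} that $ab:=(a\circledcirc b)(w)$ makes $\mathbb C^d$ into a commutative unital algebra $A_w$. From the definition of $\circledcirc$ the map $g(w)\mapsto (f\circledcirc g)(w)$ is $\mathbb C$-linear, so there is a matrix $M_f(w)\in\mathbb C^{d\times d}$ with $(f\circledcirc g)(w)=M_f(w)\,g(w)$; since the term $f\circ g$ contributes $\operatorname{diag} f(w)$ and the correction term $w\,(L\circ\Box f(w)\circ\Box g(w))\ell$ is bilinear with coefficients depending only on $w$ and $\Lambda$, every entry of $M_f(w)$ is bounded by $C_1(M,\Lambda)\,|f(w)|_\infty\le C_1\,|f|_M$.

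Next I would show $|\det M_f(w)|\ge\eta^d$ for all $w\in M$. For a noncritical value $w\notin p(\Lambda_1)$ the $d$ roots $z_1(w),\dots,z_d(w)$ of $p(\zeta)=w$ are distinct, so the functionals $a\mapsto\sum_l\delta_l(z_k(w))a_l$ are $d$ distinct characters of $A_w$ by Theorem \ref{kaikkikarakteerit}, hence linearly independent; writing $V(w)=(\delta_l(z_k(w)))_{k,l}$, this means $V(w)$ is invertible, and since $\mathcal L$ carries $\circledcirc$ to the pointwise product one obtains $V(w)\,M_f(w)=\operatorname{diag}\bigl(\hat f(z_1(w)),\dots,\hat f(z_d(w))\bigr)\,V(w)$, so $\det M_f(w)=\prod_{k=1}^d\hat f(z_k(w))$. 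By hypothesis each factor has modulus $\ge\eta$, so $|\det M_f(w)|\ge\eta^d$ on the dense set of noncritical values; since $w\mapsto M_f(w)$ is (manifestly, from the formula for $\circledcirc$) continuous on $M$, the bound $|\det M_f(w)|\ge\eta^d$ persists for every $w\in M$.

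Now $M_f(w)$ is invertible for every $w$, so the pointwise equation $(f\circledcirc g)(w)=\mathbf 1$ has the unique solution $g(w)=M_f(w)^{-1}\mathbf 1=\det M_f(w)^{-1}\,(\operatorname{adj} M_f(w))\,\mathbf 1$; as $\det M_f(w)$ stays away from $0$ and $M_f(w)$ is continuous, this $g$ is continuous, hence $g\in C_\Lambda(M)$ and $f\circledcirc g=\mathbf 1$ (existence of the inverse in $C_\Lambda(M)$ also follows from Theorem \ref{peruslause}(iii)). Each entry of $\operatorname{adj} M_f(w)$ is a $(d-1)\times(d-1)$ minor of $M_f(w)$, hence bounded by $C_2(M,\Lambda)\,|f|_M^{d-1}$; combined with $|\det M_f(w)|\ge\eta^d$ this gives $|g(w)|_\infty\le C_3(M,\Lambda)\,|f|_M^{d-1}/\eta^d$, so $|g|_M\le C_3\,|f|_M^{d-1}/\eta^d\le C_3\,\|f\|^{d-1}/\eta^d$, and the norm equivalence $\|g\|\le C|g|_M$ of Proposition \ref{epayhtaloitanormeille} yields the claimed estimate with $C$ depending only on $M$ and $\Lambda$ (the degree $d$ being determined by $\Lambda$). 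The one delicate point is the lower bound on $\det M_f(w)$ at the critical values, where $A_w$ is no longer semisimple and $V(w)$ degenerates; this is handled purely by continuity of $w\mapsto M_f(w)$ together with density of the noncritical values, which is why it matters that $M_f(w)$ — as opposed to any particular labelling of the branches $z_k(w)$ — depends continuously on $w$ throughout $M$.
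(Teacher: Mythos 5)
Your route is genuinely different from the paper's: the paper expands $\Phi(w)\psi(z)=\prod_{j\ge 2}\varphi(z_j(w))$ as $\sum_{|\alpha|=d-1}q_\alpha(z)F_\alpha(w)$ and uses symmetric functions (Lemma \ref{riippuuvaintuplaveesta}) to see that the coefficients are polynomials, whereas you work fibrewise with the multiplication matrix $M_f(w)$ (the matrix the paper itself introduces later, as $B_f(w)$, for the nilpotency result) and invert it by Cramer's rule, using the similarity $V(w)M_f(w)=\operatorname{diag}(\hat f(z_k(w)))V(w)$ to compute the determinant. That strategy can deliver the stated estimate, and your entrywise bound, adjugate bound, and the passage from $|g|_M$ to $\|g\|$ via Proposition \ref{epayhtaloitanormeille} are all fine. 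But there is one genuine gap, and it sits exactly at the point you yourself flag as delicate: your lower bound $|\det M_f(w)|\ge\eta^d$ at critical values is justified by ``continuity of $w\mapsto M_f(w)$ together with density of the noncritical values.'' Noncritical values need not be dense in $M$: $M$ is an arbitrary compact set, and in the paper's main application $M=p(\sigma(A))$ is a \emph{finite} set containing critical values of $p$ as isolated points (this is precisely the non-semisimple situation of Theorem \ref{semisimppelilause} and the remark following it, i.e.\ the Jordan-block case). At such an isolated $w_0\in M$ there are no nearby points of $M$ at all, so continuity of $M_f$ along $M$ gives nothing, and as written your quantitative bound is unproved in exactly the case the theorem is most needed.

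The gap is repairable without changing your strategy. Both $\det M_f(w)$ and $\prod_{k=1}^d\bigl(\sum_l\delta_l(z_k(w))\,a_l\bigr)$ are polynomial functions of the pair $(w,a)\in\mathbb C\times\mathbb C^d$: the entries of the multiplication matrix are affine in $w$ and linear in $a$, and the product over the roots is a symmetric function of $z_1(w),\dots,z_d(w)$, hence (by the same Newton-type argument as in Lemma \ref{riippuuvaintuplaveesta}) a polynomial in $w$ and $a$. Your character computation shows the two agree for every $a$ whenever $w$ avoids the finitely many critical values of $p$; since such $w$ are dense in $\mathbb C$ (not merely in $M$), the identity holds for all $(w,a)$, in particular at $w=w_0$, $a=f(w_0)$, with the roots $z_k(w_0)$ counted with multiplicity. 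Those roots lie in $p^{-1}(M)$, so each factor has modulus at least $\eta$ and $|\det M_f(w_0)|\ge\eta^d$ survives; continuity of $g=\det M_f^{-1}(\operatorname{adj}M_f)\mathbf 1$ and the final estimate then go through as you wrote them (existence of the inverse being anyway guaranteed by Theorem \ref{peruslause}).
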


 Before  turning to prove this we look at an instructive example.
\begin{example}
We shall first consider the degree two case with $w=z^2-1$.  If we put $\varphi (z) = \mathcal L f(z)$, then the inverse $g=f^{-1}$ is given simply as follows:
\begin{equation}\label{2inv}
\begin{pmatrix}
g_1(w)\\
g_2(w)
\end{pmatrix}
= \frac{1}{\varphi(z)\varphi(-z)} \begin{pmatrix}
f_2(w)\\
f_1(w)
\end{pmatrix}.
\end{equation}
In fact, since
$$
g\circledcirc f=g\circ f + \frac{w}{4} (g_1-g_2)(f_1-f_2) {\bf {1}} 
$$
we have 
$$
g\circledcirc f = \frac{1}{\varphi(z) \varphi(-z)} (f_1f_2 -\frac{w}{4}(f_1-f_2)^2) \bf 1 
$$
and then expanding   
$\varphi(z)\varphi(-z) $
we obtain
$$ [\frac{1+z}{2} f_1+\frac{1-z}{2} f_2]
[\frac{1-z}{2} f_1+\frac{1+z}{2} f_2]=
f_1f_2 -  \frac{w}{4} (f_1-f_2)^2.$$
In particular, the constant $C$ in (\ref{estimalhaalta})  equals $ 1$ in this case.

\end{example}

\begin{proof} 
The example suggests  to look at the inverse in the following  way.
Denote  by $z_j=z_j(w)$ the roots of $p(\zeta)-w=0$ and when needed, we put $z_1(p(z))=z$.  With   $\varphi = \mathcal L f$ and  $\psi=1/\varphi = \mathcal L g$ we then have
$$
\psi(z_1) = \frac{1}{\Phi(w)}{\prod_{j=2}^d \varphi(z_j)} 
$$
where  $\Phi(w)= {\prod_{j=1}^d \varphi(z_j)}$.  We need the following lemma.

%tassa ollaan, ehka lemman muotoilu voisi olla hieman toinen

\begin{lemma}\label{riippuuvaintuplaveesta}
Suppose that $f$  has analytic components in $M$.  Then 
\begin{equation}\label{isofii}
\Phi: \ w \mapsto {\prod_{j=1}^d \varphi(z_j(w))}
\end{equation}
is analytic in $M$.   
\end{lemma}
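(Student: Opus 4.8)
The plan is to exploit that, while the individual roots $z_j(w)$ of $p(\zeta)-w=0$ are only locally analytic and branch over the critical values $p(\Lambda_1)$, the product $\Phi(w)=\prod_{j=1}^{d}\varphi(z_j(w))$ is a \emph{symmetric} function of these $d$ roots, hence single-valued; the actual content of the lemma is that it moreover has no singularities at all, and I would obtain this by exhibiting $\Phi$ explicitly as a polynomial combination of the analytic components of $f$.

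First I would use $p(z_j(w))=w$ to collapse the multicentric representation at the roots:
$$
\varphi(z_j(w))=\sum_{k=1}^{d}\delta_k\big(z_j(w)\big)\,f_k(w),
$$
so that $\Phi(w)=\prod_{j=1}^{d}\big(\sum_{k=1}^{d}\delta_k(z_j(w))\,f_k(w)\big)$. Treating the numbers $f_1(w),\dots,f_d(w)$ for the moment as indeterminates $t_1,\dots,t_d$, this is the product $\prod_{j=1}^{d}\ell\big(z_j(w);t\big)$ of the single linear form $\ell(\zeta;t)=\sum_{k}\delta_k(\zeta)t_k$ evaluated at the $d$ roots. Expanding gives
$$
\Phi(w)=\sum_{|\alpha|=d}c_\alpha(w)\prod_{k=1}^{d}f_k(w)^{\alpha_k},
$$
where each coefficient $c_\alpha(w)$ is a polynomial in $z_1(w),\dots,z_d(w)$.

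The key step is that, since the factors $\ell(z_j(w);t)$ all come from one and the same polynomial $\ell(\cdot;t)$ evaluated at the various roots, the product is invariant under permuting $z_1(w),\dots,z_d(w)$; hence every $c_\alpha(w)$ is a symmetric polynomial in the roots of $p(\zeta)-w$. By the fundamental theorem of symmetric polynomials it is then a polynomial in their elementary symmetric functions $e_1,\dots,e_d$. But $e_1,\dots,e_{d-1}$ are, up to sign, the subleading coefficients of $p$ and hence independent of $w$, while $e_d=(-1)^d\big(p(0)-w\big)$ is affine in $w$; thus each $c_\alpha(w)$ is an honest polynomial in $w$. Therefore $\Phi$ is a finite sum of products of polynomials in $w$ with the analytic functions $f_k$, and so is analytic in $M$.

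I do not expect a genuine obstacle here: the only point needing attention is the permutation bookkeeping in the expansion — checking that the coefficient of a fixed monomial $t^\alpha$ really is unchanged when the roots are relabelled — and the rest is formal once the product is written as $\prod_j\ell(z_j(w);t)$. For completeness I would also record a softer alternative that bypasses symmetric functions: $\varphi$ is continuous on the compact $K=p^{-1}(M)$, hence bounded, so $|\Phi(w)|\le\big(\sup_{z\in K}|\varphi(z)|\big)^{d}$; away from the finitely many critical values in $M$, $\Phi$ is locally a product of analytic functions and, being symmetric in the roots, single-valued by a monodromy argument; and a bounded holomorphic function on a punctured neighbourhood of a point extends holomorphically there, so $\Phi$ is analytic on all of $M$. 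This route is shorter but yields less — it does not display $\Phi$ as a polynomial combination of the $f_k$, which is useful for the quantitative estimates that follow — so I would keep the first argument as the main one.
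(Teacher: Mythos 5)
Your main argument is correct and is essentially the paper's own proof: the paper likewise writes $\varphi(z_j(w))=\sum_k\delta_k(z_j(w))f_k(w)$, passes to the symmetric product $\prod_j q(z_j)$ and invokes Newton's theorem on symmetric polynomials, observing that all elementary symmetric functions of the roots of $p(\zeta)-w$ are constant in $w$ except the last, which is affine, so $\Phi$ is a polynomial combination of the analytic $f_k$. Your device of carrying $f_1(w),\dots,f_d(w)$ as indeterminates is just a cleaner rendering of the paper's informal step ``we may as well assume the $f_k$ are constants'', and your softer boundedness/removable-singularity alternative is an optional extra (which, as you note, yields less and would in any case require the critical values to lie in open parts of $M$).
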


{\it Proof of lemma.}
All roots $z_j(w)$ are analytic except possibly at critical values.  Since $\varphi(z_j(w))$ is given  by
$$
\varphi(z_j(w)) = \sum_{k=1}^d \delta_k(z_j(w)) f_k(w)
$$
with $f_k$'s analytic, we may as well assume that  $f_k$'s are constants as the only source for lack of analyticity at the critical values would come from products of $\delta_k$'s. But if    $f_k$'s  are constants, we may put   $q(\zeta)=\sum_{k=1}^d f_k \delta_k(\zeta)$. However,  then 
$$
p(\zeta_1, \cdots,\zeta_d) = \prod_{j=1}^d q(\zeta_j)
$$
is a symmetric polynomial and it can be expressed uniquely by elementary polynomials $s_i$ by Newton's theorem. 
If we now substitute $\zeta_j=z_j(w)$,   where $z_j(w)$'s are the roots of $p(\zeta)-w=0$, we observe that all elementary polynomials $s_i$ except $s_d$ are constants.  For example, $s_1= -\sum_{j=1}^d z_j(w)= -\sum_{j=1}^d \lambda_j$, while $s_d(w)= (-1)^d (p(0)-w)$.  Thus we arrive at a polynomial in $w$, which completes the proof of the lemma.

Next we  concentrate on   
$$
{\Phi(w)}\psi(z) =   \prod_{j=2}^d \sum_{k=1}^d \delta_k(z_j)f_k(w)
$$
and organize this as a sum of the form
$$
\delta_1(z_2)\cdots \delta_1(z_d) \ f_1(w)^{d-1}  + \  \cdots = \sum_{|\alpha|=d-1}q_{\alpha}(z)F_\alpha(w),
$$
where $\alpha=(\alpha_1, \cdots, \alpha_d)$ and $F_\alpha(w) = \prod_{k=1}^d f_k(w)^{\alpha_k}$ while $q_\alpha(z) $ is a rather complicated sum of products of different $\delta_k$'s evaluated at $z_j$'s with $j>1$.  Treating  $z_j=z_j(p(z))$ as functions of $z$, 
 $q_\alpha(z)$ are clearly analytic away from the critical points $z\in \Lambda_1$. Individual products of $\delta_k(z_j)$'s within $q_\alpha(z)$ may have branch points at these critical points while the sum $q_\alpha(z)$ itself is however a polynomial.  To see this, let $f_k(w)=x_k$ be constants and denote $x=(x_1, \cdots,x_d)^t \in \mathbb C^d$.  Then $F_\alpha(w)=x^\alpha$ and  if we put
$$
P(z,x)=  \sum_{|\alpha|=d-1}q_{\alpha}(z)x^\alpha,
$$
then we can view $P(z,x)$ as a   polynomial in $\mathbb C \times\mathbb C^{d}$.  In fact,  $\Phi(p(z))$ is a polynomial and $\varphi(z)$ divides it so $P(z,x)$ must be a polynomial in $z$.  But then, for example by  differentiating $P(z,x)$ with $\partial ^\alpha =\prod (\frac{\partial}{\partial x_k})^{\alpha_k}$ gives $\partial^\alpha P(z,x) = \alpha! q_\alpha(z)$ showing that each $q_\alpha$  is a polynomial in $z$.

Finally write $q_\alpha(z) = \sum_{j=1}^d \delta_j(z)Q_{\alpha,j}(w)$ so that   
$$
\Phi(w) \psi(z) = \sum_{j=1}^d \delta_j(z)   \sum_{|\alpha|=d-1} F_\alpha(w)Q_{\alpha,j}(w) 
$$ 
and, written in $C_\Lambda(M)$,
$
g= \sum_{|\alpha|=d-1} F_\alpha Q_{\alpha} / \Phi.  
$
Since $|\Phi(w)| \ge \eta^d$ and $|F_\alpha(w)| \le \|f\|^{d-1}$  the claim follows with
$C=\sum_{|\alpha|=d-1} \|Q_\alpha\|$.
 
\end{proof}

%tahan asti sunnuntaina 16.11

\subsection{ Characteristic function, resolvent estimates and nilpotent elements}

From the previous discussion we see that $f$ is invertible in the algebra if and only if $\Phi$  does not vanish.   This suggests   to introduce a characteristic function for $f$. This gives still another view to the algebra.

 Denoting  again by $z_j(w)$ the roots
 of  $p(z)-w=0$ we have  $\lambda \in \sigma(f)=\{\hat f(z) \ : z \in K=p^{-1}(M) \}$ if and only if $\prod_{j=1}^d (\lambda-\hat f(z_j(w)))=0$  at some $w\in M$. 
Expanding the product as a polynomial in $\lambda$   takes the form
\begin{equation}\label{maaritteleekarakteristin}
\pi_f(\lambda,w)=\prod_{j=1}^d (\lambda-\hat f(z_j(w)))= \lambda^d - \Phi_1(w) \lambda^{d-1} + \cdots + (-1) ^d \Phi_d(w),
\end{equation}
since the coefficient functions  $\Phi_j$ are again  functions of $w$ by   the same argument as in Lemma \ref{riippuuvaintuplaveesta}; notice that  $\Phi_d$ equals the $\Phi$ in (\ref{isofii}). 

\begin{definition}\label{karakteristinenfunktio}
Given $f \in C_\Lambda(M)$ we call $\pi_f(\lambda,w)$ the characteristic function of $f$.
Further, we call $(\lambda {\bf 1} -f)^{-1}$ the resolvent element whenever it exists.

\end{definition}

This allows us to formulate a different version of  the estimate for the inversion.
To that end we denote by $|\cdot|_\infty$ the max-norm in $\mathbb C^d$.
 
 \begin{theorem}
 There exists a constant $C$, depending on $M$ and on $\Lambda$, such that  for $w\in M$
 $$
 |(\lambda {\bf 1}-f)^{-1}(w)|_\infty \le \ C \  \frac{(|\lambda| + \|f\|)^{d-1}}{|\pi_f(\lambda,w)|}.
 $$
 
 \end{theorem}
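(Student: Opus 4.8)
The plan is to reuse, for the element $h=\lambda{\bf 1}-f$, the explicit inversion formula already obtained inside the proof of Theorem~\ref{invertible}. There we produced a fixed finite family of elements $Q_\alpha\in C_\Lambda(M)$, indexed by multi-indices $\alpha=(\alpha_1,\dots,\alpha_d)$ with $|\alpha|=d-1$, whose components are polynomials and which depend only on $\Lambda$ (through the polynomials $q_\alpha$ and the roots $z_j$), together with the identity: whenever $h\in C_\Lambda(M)$ is invertible, one has pointwise on $M$
$$
h^{-1}(w)\;=\;\frac{1}{\Phi_h(w)}\sum_{|\alpha|=d-1}\Big(\prod_{k=1}^d h_k(w)^{\alpha_k}\Big)\,Q_\alpha(w),
\qquad \Phi_h(w)=\prod_{j=1}^d\widehat h\big(z_j(w)\big).
$$
First I would note that, for each fixed $w$, this is a rational identity in the coordinates $h_1(w),\dots,h_d(w)$ of $h(w)\in\mathbb C^d$, with denominator $\Phi_h(w)$ a polynomial in those coordinates; since it was verified in the proof of Theorem~\ref{invertible} for functions with analytic — in particular constant — components, and constant functions realize every vector of $\mathbb C^d$, it holds for arbitrary $h\in C_\Lambda(M)$ at every $w$ with $\Phi_h(w)\neq0$. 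Equivalently, it is just Cramer's rule in the $d$-dimensional fibre algebra $\mathbb C^d$ with product $a\,b=(a\circledcirc b)(w)$.

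Next I would specialize to $h=\lambda{\bf 1}-f$, so that $\widehat h=\lambda-\hat f$ and hence, by the definition (\ref{maaritteleekarakteristin}) of the characteristic function, $\Phi_h(w)=\prod_{j=1}^d(\lambda-\hat f(z_j(w)))=\pi_f(\lambda,w)$, while $h_k(w)=\lambda-f_k(w)$. Since the resolvent element $(\lambda{\bf 1}-f)^{-1}$ is assumed to exist we have $\lambda\notin\sigma(f)$, hence by Theorem~\ref{peruslause}(i),(iii) $\pi_f(\lambda,w)\neq0$ for every $w\in M$, and the displayed formula applies at each $w$. Taking the max-norm of both sides and using $\sum_k\alpha_k=|\alpha|=d-1$ gives
$$
|(\lambda{\bf 1}-f)^{-1}(w)|_\infty\;\le\;\frac{1}{|\pi_f(\lambda,w)|}\sum_{|\alpha|=d-1}\Big(\max_{1\le k\le d}|\lambda-f_k(w)|\Big)^{d-1}|Q_\alpha(w)|_\infty .
$$

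To finish I would estimate the two remaining factors: $\max_k|\lambda-f_k(w)|\le|\lambda|+|f(w)|_\infty\le|\lambda|+|f|_M\le|\lambda|+\|f\|$ by Proposition~\ref{epayhtaloitanormeille}, and $|Q_\alpha(w)|_\infty\le\max_{w\in M}|Q_\alpha(w)|_\infty$. Setting $C=\sum_{|\alpha|=d-1}\max_{w\in M}|Q_\alpha(w)|_\infty$ — a finite sum depending only on $M$ and $\Lambda$, and in fact bounded by the constant $C$ of Theorem~\ref{invertible} since $|Q_\alpha|_M\le\|Q_\alpha\|$ — yields the asserted bound.

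There is no serious obstacle: the content is the bookkeeping observation that the $Q_\alpha$ built in the proof of Theorem~\ref{invertible} are \emph{universal}, seeing only $\Lambda$ and not $f$, so the substitution $f\mapsto\lambda{\bf 1}-f$ leaves them untouched and merely replaces $\Phi$ by $\pi_f(\lambda,\cdot)$ and each $F_\alpha$ by $\prod_k(\lambda-f_k)^{\alpha_k}$. The one mildly delicate ingredient, already established there, is that the $q_\alpha$ (hence the $Q_\alpha$) are genuine polynomials rather than merely branched analytic functions — this is exactly what makes $\max_{w\in M}|Q_\alpha(w)|_\infty$ finite; granting it, the argument above is routine.
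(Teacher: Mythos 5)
Your proposal is correct and follows essentially the same route as the paper: the paper's proof is a one-line appeal to Theorem~\ref{invertible} ``and the definitions,'' and what you have written simply makes that explicit, reusing the pointwise inversion formula $g=\sum_{|\alpha|=d-1}F_\alpha Q_\alpha/\Phi$ from the proof of Theorem~\ref{invertible} with $h=\lambda{\bf 1}-f$, so that $\Phi_h=\pi_f(\lambda,\cdot)$, $|F_\alpha(w)|\le(|\lambda|+\|f\|)^{d-1}$, and $C=\sum_{|\alpha|=d-1}\max_{w\in M}|Q_\alpha(w)|_\infty$ depends only on $M$ and $\Lambda$.
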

 \begin{proof}
 This follows in an obvious way from  Theorem \ref{invertible}  and from the definitions.
 \end{proof}

\begin{remark}
From $\rho(f) = | \hat f|_K  \le \|f\|$ we have the lower bound
\begin{equation}\label{alarajaestimaatti}
\frac{1}{{\rm dist}(\lambda, \sigma(f))} =| \frac{1}{\lambda -\hat f}|_K \le \| (\lambda{\bf 1}-f)^{-1}\|.
\end{equation}
This in particular implies that if $f\not= \lambda {\bf 1}$ and $\lambda \in \partial \sigma(f)$,   there exists $g_n \in C_\Lambda(M)$ of unit length such that $(\lambda {\bf 1} -f ) \circledcirc g_n \rightarrow 0$.  In other words,   $\lambda {\bf 1}-f$ is a {\it topological divisor of zero}.
\end{remark}

We noted earlier that $f\in $ rad $C_\Lambda(M)$ if and only if $\hat f$ vanishes identically, or , which is the same thing, $\sigma(f)=\{0\}.$  

\begin{proposition} 
If  $\sigma(f)=\{0\}$, then $f$ is nilpotent and there exists $n \le d$ such that $f^n=0$.
\end{proposition}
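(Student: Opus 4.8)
The plan is to localize the statement at the isolated critical values of $p$ in $M$, of which there are only finitely many, and there reduce it to a fact about nilpotent ideals in a finite-dimensional commutative algebra.

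First I would rephrase the hypothesis. By Theorem \ref{peruslause}(i), $\sigma(f)=\{0\}$ is equivalent to $\hat f(z)=0$ for all $z\in K=p^{-1}(M)$, i.e. to $f\in\mathrm{rad}\,C_\Lambda(M)$. Now fix a noncritical value $w\in M$; then $p(\zeta)-w$ has $d$ pairwise distinct roots $z_1(w),\dots,z_d(w)$, and since $\delta_1,\dots,\delta_d$ form the Lagrange basis of the polynomials of degree $<d$, the matrix $\bigl(\delta_j(z_k(w))\bigr)_{k,j}$ — which represents evaluation of such polynomials at these $d$ distinct points — is invertible. As $\hat f(z_k(w))=\sum_{j}\delta_j(z_k(w))f_j(w)=0$ for $k=1,\dots,d$, this forces $f(w)=0$. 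Hence $f$ vanishes on $M$ except possibly at the critical values of $p$ lying in $M$ (at most $d-1$ points), and at any such point that is not an isolated point of $M$, continuity of $f$ again yields the value $0$. Therefore $f$ is supported on the finite set $S_0$ consisting of those critical values of $p$ that are isolated points of $M$.

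Next I would use that the polyproduct is computed pointwise in $w$: by its definition $(f\circledcirc g)(w)$ depends only on $w$, $f(w)$ and $g(w)$. Consequently $f^n$ again vanishes off $S_0$, while for $w_0\in S_0$ one has $(f^n)(w_0)=a^{\star n}$, the $n$-th power of $a:=f(w_0)$ in the $d$-dimensional commutative unital algebra $A_{w_0}=(\mathbb C^d,\star)$ with $a\star b:=(a\circledcirc b)(w_0)$, the algebra already used in the proof of Theorem \ref{kaikkikarakteerit}. (Since $S_0$ is a finite set of isolated points of $M$, every such $f^n$ automatically lies in $C(M)^d=C_\Lambda(M)$.) It thus suffices to find, for each $w_0\in S_0$, an $n\le d$ with $a^{\star n}=0$.

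For this last step I would invoke the description of characters obtained in the proof of Theorem \ref{kaikkikarakteerit}: every character of $A_{w_0}$ is of the form $b\mapsto\sum_j\delta_j(\zeta)b_j$ for some root $\zeta$ of $p(\zeta)-w_0$, and since $w_0$ is a critical value this polynomial has fewer than $d$ distinct roots, so $A_{w_0}$ has at most $d-1$ maximal ideals. Because $\hat f$ vanishes at every root $\zeta$ of $p(\zeta)-w_0$, the vector $a=f(w_0)$ lies in the kernel of every character of $A_{w_0}$, i.e. in its Jacobson radical $I$; and $I$ is a proper ideal of the finite-dimensional commutative algebra $A_{w_0}$, hence nilpotent, the descending chain $I\supsetneq I^2\supsetneq\cdots$ being strict by Nakayama's lemma until it reaches $0$ and thus reaching $0$ in at most $\dim_{\mathbb C}I+1\le d$ steps, so $I^d=0$. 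Thus $a^{\star d}=0$ for every $w_0\in S_0$, whence $f^d=0$ in $C_\Lambda(M)$, and one may take $n\le d$ to be the largest nilpotency index that actually occurs. I expect the only genuinely delicate point to be this first reduction — recognizing that elements of the radical are supported on the isolated critical values and that their powers can be computed inside the pointwise algebras $A_{w_0}$; once that is set up, the rest is standard finite-dimensional algebra.
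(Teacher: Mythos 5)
Your argument is correct, and it reaches the same pointwise reduction as the paper but by different mechanics. The paper regards multiplication by $f$ as a multiplication operator: for each $w\in M$ there is a $d\times d$ matrix $B_f(w)$ with $(f\circledcirc g)(w)=B_f(w)g(w)$; quasinilpotency of $f$ forces each $B_f(w)$ to be quasinilpotent for fixed $w$, and a quasinilpotent $d\times d$ matrix is nilpotent, whence $f^d(w)=B_f(w)^{d}\mathbf{1}=0$. You instead use the Gelfand picture: $\sigma(f)=\{0\}$ means $\hat f\equiv 0$ on $K$ (Theorem \ref{peruslause}), so at each $w$ the vector $f(w)$ is annihilated by all characters of the pointwise algebra $A_w=(\mathbb C^d,\star)$ --- these being exactly the evaluations $b\mapsto\sum_j\delta_j(\zeta)b_j$ with $p(\zeta)=w$, as obtained in the proof of Theorem \ref{kaikkikarakteerit} --- hence $f(w)$ lies in the Jacobson radical of $A_w$, which is nilpotent of index at most $d$ by the dimension count via Nakayama. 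What your route buys: you never need to transfer quasinilpotency of $f$ in the Banach algebra to the pointwise matrices (a step the paper asserts without detail; justifying it requires, e.g., $B_{f^n}(w)=B_f(w)^n$ together with a bound of $|B_{f^n}(w)|$ by $C\,\|f^n\|$), and the bound $n\le d$ falls out cleanly from $\dim I\le d-1$. What the paper's route buys: it uses no information about the characters of $A_w$, only the elementary fact that a quasinilpotent matrix is nilpotent. One remark: your preliminary support analysis (showing $f$ vanishes off the isolated critical values, in the spirit of Theorem \ref{semisimppelilause}) is correct but superfluous --- the radical argument applied at an arbitrary $w\in M$ already gives $(f(w))^{\star d}=0$, and at noncritical $w$ it even gives $f(w)=0$ since $A_w$ is then semisimple; so your final paragraph could be run uniformly over all of $M$.
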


\begin{proof}
In other words, we need to show that all quasinilpotent elements are actually nilpotent. It is clear from Theorem \ref{semisimppelilause} that nontrivial quasinilpotent elements exist when $M$ contains an isolated critical value, say $w_0$.  We can proceed now as follows. We view   the multiplication 
 $$
 f: g \mapsto f\circledcirc g
 $$
 as an operator in $C(M,\mathbb C^d)$
and hence  for each $w\in M$ there is a matrix $B_f(w)$ such that
$$
(f \circledcirc g) (w)=B_f(w) g(w).
$$
If $f$ is quasinilpotent, it means that each $B_f(w)$ must be for fixed $w$ quasinilpotent.  However, a $d\times d$-matrix is quasinilpotent  only when it is nilpotent, from which the claim follows.
\end{proof}

%TASSA 20.5.2015

 \begin{example}
Consider  $w=z^2-1$.  Put $h=\frac{w}{4} (f_1-f_2)$.  Then
$$
B_f= \begin{pmatrix} f_1+h & -h\\
h & f_2-h
\end{pmatrix}
$$
which  for fixed $w$ has the eigenvalues 
$$
\frac{1}{2}(f_1(w) + f_2(w)) \pm \frac{\sqrt{1+w}}{2} (f_1(w)-f_2(w)).
$$
That is, the eigenvalues are simply $\varphi(z)$ and $\varphi(-z)$.
Denote 
$$
E(z)= \begin{pmatrix}
\delta_1(z)&\delta_2(z)\\
\delta_1(-z)& \delta_2(-z)
\end{pmatrix}
$$
so that  for $z\not=0$
$$
E(z)^{-1} = \frac{1}{2z}\begin{pmatrix}
z+1&z-1\\
z-1&z+1
\end{pmatrix}.
$$
Finally,
$$
\begin{pmatrix}
\varphi(z)&0\\
0&\varphi(-z)
\end{pmatrix} = E(z) B_f(w) E(z)^{-1}
$$
and we see that the eigenvectors are independent of the function $f$. At $z=0$ the eigenvalues agree,  and $E(0)$ is no longer invertible.
Put $f(-1)=(1, -1)^t$ so that $\varphi(0)=0.$  Then
$$
B_f(-1)=\frac{1}{2} \begin{pmatrix} 1&1\\
-1&-1\end{pmatrix}   \text { is similar to } \begin{pmatrix} 0&1\\ 0&0\end{pmatrix}.
$$
\end{example}

%  QUOTIENT ALGEBRA

\subsection{Quotient algebra  $C_\Lambda(M)/\mathcal I_{K_0}$}

When we apply the functional calculus,  discussed in the next section, the natural requirement for $\varphi$ is  that it is well defined at the spectrum $\sigma(A)$ of the operator $A$, which means that  $f$  representing $\varphi$ must be well defined on a set which includes $p(\sigma(A))$.  However,    $p^{-1}(p(\sigma(A))$   is likely to be properly larger than $\sigma(A)$ which in practice shows up in lack of uniqueness in representing $\varphi$.  

Let $K_0\subset \mathbb C$ be compact,  put $p(K_0)=M$ and denote as before
$K=p^{-1}(M)$.  We assume here that the inclusion $K_0 \subset K $ is proper.

Let $\mathcal I_{K_0}$ be the  closed ideal in $C_\Lambda(M)$
$$
\mathcal I_{K_0} =\{ f \in C_\Lambda(M) : \hat f(z)=0 ?\ \text{ for } \ z \in K_0\}.
$$
Then the set of elements we are dealing with can be identified with the cosets $[f]$ :
$$
C_\Lambda(M) / \mathcal I_{K_0} =\{ [f]  : [f]= f+\mathcal I_{K_0}\}.
$$
This is a unital  Banach algebra with norm defined as
$$
\|?\ [f]  \ \|= \inf_{g\in \mathcal I_{K_0}} \|f+g\|.
$$
We need to identify the character space of this quotient algebra.
\begin{definition} 
Given a closed ideal $J \subset \mathcal A$ the hull of the ideal is  the set of all characters which vanish at every element in the ideal.
\end{definition}

\begin{lemma} (Theorem 6.2 in [5])
Given a closed ideal $J$ in a commutative Banach algebra $ \mathcal A$, the character space of  the quotient algebra $ \mathcal A/ J $ is the hull of $  J$.
\end{lemma}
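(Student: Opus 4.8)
The plan is to exhibit the natural bijection between the characters of the quotient algebra $\mathcal A/J$ and those characters of $\mathcal A$ that annihilate $J$ — that is, the hull of $J$ — and then to observe that this bijection is compatible with the algebraic and topological structure. Write $\pi:\mathcal A\to\mathcal A/J$ for the canonical quotient homomorphism; since $J$ is closed, $\mathcal A/J$ is again a commutative unital Banach algebra with unit $\pi(e)$, and every character of it is automatically bounded, just as recalled earlier for $\mathcal A$ itself.

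First I would treat the forward direction. Let $\tilde\chi$ be a character of $\mathcal A/J$. Then $\chi:=\tilde\chi\circ\pi$ is a linear functional on $\mathcal A$; it is multiplicative because both $\pi$ and $\tilde\chi$ are, and it is nontrivial because $\chi(e)=\tilde\chi(\pi(e))=1$, so $\chi$ is a character of $\mathcal A$. Moreover $\chi$ vanishes on $J$ since $\pi(J)=\{0\}$, so $\chi$ belongs to the hull of $J$.

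Conversely, given a character $\chi$ of $\mathcal A$ with $\chi|_J=0$, the inclusion $J\subset\ker\chi$ allows $\chi$ to factor through the quotient: the formula $\tilde\chi(a+J):=\chi(a)$ is well defined, and it is linear, multiplicative, and sends $\pi(e)$ to $1$, hence is a character of $\mathcal A/J$. The two assignments $\tilde\chi\mapsto\tilde\chi\circ\pi$ and $\chi\mapsto\tilde\chi$ are mutually inverse, which gives the claimed identification of $\Sigma_{\mathcal A/J}$ with the hull of $J$ as a set. If one wants the character space in its Gelfand (weak-$^*$) topology, one adds that precomposition with the fixed bounded map $\pi$ is weak-$^*$ continuous and, together with the fact that both spaces are compact Hausdorff, that this bijection is a homeomorphism.

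There is no substantial obstacle here: the statement is soft and is essentially the universal property of the quotient together with the automatic continuity of complex homomorphisms. The only points deserving a word of care are that $\mathcal A/J$ is genuinely a Banach algebra — which is precisely why $J$ is assumed closed — and the identification of topologies if $\Sigma_{\mathcal A/J}$ is to be regarded as the maximal ideal space with its Gelfand topology rather than merely as a set; both are standard.
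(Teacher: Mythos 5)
Your proof is correct. Note that the paper itself gives no argument for this lemma --- it is quoted as Theorem 6.2 of Gamelin's \emph{Uniform Algebras} --- and what you wrote is exactly the standard proof behind that citation: characters of $\mathcal A/J$ pull back along the quotient map $\pi$ to characters of $\mathcal A$ annihilating $J$, characters of $\mathcal A$ vanishing on $J$ factor through $\pi$, the two assignments are inverse to each other, and the identification is a homeomorphism because precomposition with $\pi$ is weak-$^*$ continuous, $\Sigma_{\mathcal A/J}$ is compact (the algebra being unital), and the hull is a weak-$^*$ closed, hence compact Hausdorff, subset of $\Sigma_{\mathcal A}$, so a continuous bijection between them is a homeomorphism. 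Your closing remarks about why closedness of $J$ is needed (so that $\mathcal A/J$ is a Banach algebra and automatic continuity of characters applies) are the right points of care; there is nothing to add.
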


\begin{corollary}\label{tahanviitataan}
The quotient algebra $C_\Lambda(M)/ \mathcal I_{K_0}$ is a Banach algebra with unit and the character space can be identified with $K_0$,  so that the Gelfand transformation becomes
$
  [f] \mapsto \hat f_{| K_0}.
$
\end{corollary}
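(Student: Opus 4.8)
The plan is to apply the lemma stated just above (Theorem 6.2 in [5]) together with the complete list of characters of $C_\Lambda(M)$ from Theorem \ref{kaikkikarakteerit}, and then to carry out the one genuinely nontrivial step: showing that the hull of $\mathcal I_{K_0}$ is no larger than $K_0$. The routine part goes as follows. By Theorem \ref{kaikkikarakteerit} we have $\mathcal I_{K_0}=\bigcap_{z\in K_0}\ker\chi_z$, so $\mathcal I_{K_0}$ is an intersection of kernels of characters, hence a closed ideal, and it is proper since $\hat{\mathbf 1}\equiv 1$ forces $\mathbf 1\notin\mathcal I_{K_0}$ (we assume $K_0\ne\emptyset$). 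Therefore $C_\Lambda(M)/\mathcal I_{K_0}$, normed by $\|[f]\|=\inf_{g\in\mathcal I_{K_0}}\|f+g\|$, is a commutative unital Banach algebra with unit $[\mathbf 1]$ by the standard quotient construction, which uses nothing special about our algebra. By the quoted lemma its character space is the hull
$$
H=\{\,z\in K=p^{-1}(M):\ \hat f(z)=0\ \text{ for all }f\in\mathcal I_{K_0}\,\},
$$
where $\mathcal X$ has already been identified with $K$ via $\chi_z\leftrightarrow z$. The inclusion $K_0\subseteq H$ is immediate from the definition of $\mathcal I_{K_0}$, and once $H=K_0$ is established the remaining assertions follow: the Gelfand transform of $[f]$ is $\chi_z\mapsto\chi_z(f)=\hat f(z)$ for $z\in K_0$, i.e. $\widehat{[f]}=\hat f_{\,|K_0}$, and the Gelfand topology on $H=K_0$ coincides with the subspace topology of $\mathbb C$ because every $\hat f$ is continuous in $z$ while $z\mapsto z$ is itself a Gelfand transform (of the constant vector $(\lambda_1,\dots,\lambda_d)^t$, say).

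The substance is the reverse inclusion $H\subseteq K_0$, which amounts to producing, for every $z_0\in K\setminus K_0$, an element of $\mathcal I_{K_0}$ whose transform does not vanish at $z_0$. Given such a $z_0$, put $w_0=p(z_0)\in M$ and use compactness of $K_0$ to choose a ball $B$ about $z_0$ with $\overline B\cap K_0=\emptyset$. The set $p^{-1}(w_0)\cap K_0$ is finite and disjoint from $\overline B$, so an elementary upper-semicontinuity argument yields a closed neighbourhood $W$ of $w_0$ in $M$, containing no critical value of $p$ other than possibly $w_0$, small enough that $p^{-1}(W)\cap K_0$ stays away from $B$ and that over $W$ only the branch near $z_0$ meets $B$. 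I would then \emph{define} $f$ to be $0$ off $W$ and, on $W$, to be the vector function produced by the inversion formula (\ref{INV}) of Proposition \ref{kaanteispropo} from the prescription that the transform equal a bump $\beta(w)$ with $\beta(w_0)=1$, $\beta$ vanishing near $\partial W$, on the branch near $z_0$, and equal $0$ on every other branch over $W$. Then $\hat f$ is supported in $B$, hence vanishes on $K_0$, so $f\in\mathcal I_{K_0}$, while $\hat f(z_0)=\beta(w_0)=1\ne0$, so $z_0\notin H$.

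The step I expect to be the main obstacle is verifying that this $f$ is genuinely continuous on all of $M$, i.e. really lies in $C_\Lambda(M)=C(M)^d$: formula (\ref{INV}) has apparent singularities at the critical values of $p$, coming from the factors $1/p'(z_j(w))$, but there each offending term carries the prescribed transform value on the branch $z_j(w)$, and this value has been arranged to be $0$ on every branch that collides at a critical point — when $z_0$ is not critical all colliding branches lie outside $B$, and when $z_0$ is critical one has $\beta\equiv1$ on a whole cluster of branches and invokes the identity $\sum_{j=1}^d\delta_j(\lambda_k;w)\equiv1$ to rewrite that cluster's contribution as $1-\sum_{\text{remaining branches}}\delta_j(\lambda_k;w)$, which is continuous. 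Once this cancellation is checked, $f\in C_\Lambda(M)$, $H=K_0$, the character space is identified with $K_0$, and $\widehat{[f]}=\hat f_{\,|K_0}$, completing the proof.
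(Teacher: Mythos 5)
Your proposal follows the same route the paper intends: identify the characters of $C_\Lambda(M)$ via Theorem \ref{kaikkikarakteerit}, invoke the standard quotient construction for the Banach algebra structure, and apply the quoted hull lemma (Theorem 6.2 in [5]) to identify the character space of the quotient with the hull of $\mathcal I_{K_0}$. The difference is that the paper stops there, treating the identification of the hull with $K_0$ as immediate, whereas you actually prove the only nontrivial inclusion, hull $\subseteq K_0$, by a bump construction over a small neighbourhood $W$ of $w_0=p(z_0)$ combined with the inversion formula (\ref{INV}) of Proposition \ref{kaanteispropo}. That completion is genuinely worthwhile, since for $z_0$ a critical point outside $K_0$ the existence of $f\in\mathcal I_{K_0}$ with $\hat f(z_0)\neq 0$ is not a formality, and your key observations (compactness gives a ball $B$ around $z_0$ missing $K_0$; the prescribed transform is constant on the whole cluster of branches colliding at $z_0$, which is what makes the apparent singularities of (\ref{INV}) cancel) are the right ones.

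Two details in your continuity argument deserve tightening. First, your rewriting of the cluster contribution as $1-\sum_{\text{remaining}}\delta_j(\lambda_k;w)$ only settles continuity when all remaining branches are non-colliding; if $w_0$ is the common critical value of \emph{several} critical points, the remaining sum contains other colliding clusters whose individual terms blow up, so you need the statement that each cluster sum $\sum_{j\in C}\delta_j(\lambda_k;w)$ extends continuously across $w_0$ — e.g.\ because it is a symmetric function of the colliding roots, or directly from the residue representation $\sum_{j\in C}\delta_j(\zeta;w)=\frac{1}{2\pi i}\oint_{\partial B}\frac{p(\zeta)-w}{p(\xi)-w}\,\frac{d\xi}{\zeta-\xi}$ valid for $\zeta=\lambda_k\notin B$ and $w$ near $w_0$. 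Second, if $w_0$ is an isolated point of $M$ and a critical value, formula (\ref{INV}) cannot be invoked at $w_0$ at all; there one should instead argue directly that evaluation of the degree-$\le d-1$ polynomials $z\mapsto\sum_k\delta_k(z)a_k$ at the finitely many \emph{distinct} points of $p^{-1}(w_0)$ is surjective, prescribe the values $1$ at $z_0$ and $0$ on $p^{-1}(w_0)\cap K_0$, and extend $f$ by zero, continuity being trivial at an isolated point. With these two repairs your argument is complete and, unlike the paper's one-line citation, actually verifies that the hull is exactly $K_0$.
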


%LAMBDA(K)

\subsection{Additional remarks on $\mathcal L C_\Lambda(M)$}

Here we make some observations on the  range of  the Gelfand transformation.   Denoting  $\mathcal L C_\Lambda(M)
= \{\varphi\in C(K) : \exists f\in C_\Lambda(M)  \text { such that } \varphi=\hat f\}$ we clearly have a  normed subalgebra of $C(K)$ with the  sup-norm on $K=p^{-1}(M)$ but the algebra need not be closed.

\begin{example}   
Let $\Lambda=\{-1,1\}$ so that  $p(x)=x^2-1$, and  $M=[ -1,0]=\{x :  -1 \le x \le 0 \}$ so that $K=p^{-1}(M)=[-1,1] $.  Then  $\mathcal L C_\Lambda(M)$ contains all polynomials as any polynomial $Q(x)$  can uniquely be written as
$$Q(x)= \sum_{j=1}^d  \delta_j(x) Q_j(p(x))
$$
where $Q_j$'s are polynomials.  Now, polynomials are dense in $C(K)$ and we conclude that the closure of $\mathcal LC_\Lambda(M)$  equals $ C(K)$ in this case.  However,  if we take $\varphi \in C(K)$ such that 
$$
\varphi(x) =  \max\{x^\alpha 
,0\}
$$  
then for $0<\alpha<1$ we have $\varphi \in C(K)  \setminus \mathcal L C_\Lambda(M)$.  In fact,  for $x\not=0$ we have $\varphi (x) = \mathcal L f(x)$ with $f(x^2-1)$ becoming unbounded as $x$ tends to $0$.
Note, that in this example the Gelfand transformation is injective.

\end{example}
\begin{example}
Let $\Lambda =\{-1,1\}$ but $K=\Lambda_1=\{0\}$.  Then $C_\Lambda (\{-1\})$ is a two-dimensional complex algebra, with nontrivial radical consisting of  vectors $f$ such that
$f_1(-1)+f_2(-1)=0$.   On the other hand $ \mathcal L C_\Lambda(\{-1\})$ is one-dimensional, closed  and   isomorphic with the complex field.

\end{example}
  
\begin{example}\label{reikaleipa}  Let $\Lambda=\{-1,1\}$ and $K=\{ z\ : \varepsilon \le |z| \le 2\}$  with some  small positive $\varepsilon$.   Then the critical point, the origin, is not in $K$ and  the  following hold with some constant $C$
$$
\|\mathcal L f\|_\infty \le \|f \|  \le C \  \|\mathcal Lf \|_\infty.
$$
  However,  if   $\mathcal L f(x+iy)= x^\alpha$ with $0<\alpha<1$ for $x>0$ and vanishing on the left half plane , then  
$$
\| f\|  \sim \rm Const/ \varepsilon ^{1-\alpha}.
$$
 \end{example}

It is natural to ask whether $\varphi \in \mathcal LC_\Lambda(M)$ shall be differentiable at   the interior critical points. 
After all, we shall be able to apply the functional calculus   in such a case for matrices which do have a nontrivial Jordan block  and we usually assume that the value on the off-diagonal would be the derivative of $\varphi$ at the eigenvalue in question.

\begin{example}\label{perustaa}
Let again $p(z)=z^2-1$ but $K$ such that it contains the critical point  in the interior:  $K=\{z : |p(z)|\le 2\}$.  Then $M$ likewise contains a neighborhood of -$1$.   We have
$$
\varphi(z) = \frac{1}{2}[f_1(w)+f_2(w)] + \frac{z}{2}[f_1(w)-f_2(w)].
$$
If $\varphi \in \mathcal L C_\Lambda(M)$, then $f_i \in C(M)$ and we have
$$
\frac{1}{2z} [\varphi(z)-\varphi(-z)] = \frac{1}{2}[f_1(z^2-1)-f_2(z^2-1)] 
$$ and hence the limit  
$$
\lim_{z\rightarrow 0}\frac{1}{2z} [\varphi(z)-\varphi(-z)] = \frac{1}{2}[f_1(-1)-f_2(-1)] 
$$
always exists. However,  it does not imply that $\varphi$  would be  differentiable at the origin. In fact, we have
\begin{align}
&\frac{1}{z}[\varphi(z)-\varphi(0)] \\
=&\frac{1}{2z} \{ [f_1(z^2-1)+f_2(z^2-1)] -[f_1(-1)+f_2(-1)]\\
+&\frac{1}{2}[f_1(z^2-1)-f_2(z^2-1)].
\end{align}Here the last term is continuos as $z$ tends to origin. Thus the derivative exists depending on the  behavior of $f_1+f_2$ near $w=-1$.  In particular, if  $f_1+f_2$ is H\"older  continuous with exponent >1/2, then  $\varphi$ is differentiable.

\end{example}

%KOLMAS LUKU ALKAA

\section{Functional calculi}

\subsection{Functional calculus for matrices}

We discuss first the functional calculus related to $C_\Lambda(M)$ for matrices. 
Denote by $\mathbb M_n$ complex $n \times n$-matrices with the norm 
$$
\| A\| = \sup_{|x|_2=1}|Ax |_2.
$$  
   Further, we  denote by $\sigma(A)=\{\alpha_k\}$ the eigenvalues of $A$
and by $m_A $ the minimal polynomial of $A$, that is,  the monic polynomial $q$ of smallest degree such that $q(A)=0 $:
$$
m_A(z)=\prod_{k=1}^m (z-\alpha_k)^{n_k+1}.
$$

As mentioned in the introduction, the usual way to formulate the class of functions $\varphi$  for which $\varphi(A)$ is well defined, asks the following to be known  at every eigenvalue $\alpha_k$
$$
\varphi(\alpha_k), \cdots, \varphi^{(n_k)} (\alpha_k),
$$
[6], [11].   Based on this information one can then construct an Hermite  interpolation polynomial $p$ and set $\varphi(A)=p(A)$.

 As we saw in Example  \ref{perustaa}   the functions in our algebra do not need to be differentiable - but of course when they are the resulting functional calculus yields the same matrices $\varphi(A)$.

\begin{definition}\label{simply}Given $A\in \mathbb M_n$  we call all monic polynomials $p$ such that $p(A)$ is similar to a diagonal matrix as simplifying polynomials for $A$.
\end{definition}

If $\mathcal K$ denotes those indices $k$ for which $n_k>0$ in the minimal polynomial, then  setting
$$
s_A(z)= \int_{0}^z \prod_{k\in \mathcal K}(\zeta-\alpha_k)^{n_k} d\zeta + c
$$
we have a polynomial of minimal degree such that $s_A^{(j)}(\alpha_k)=0$ for $j=1, \cdots, n_k$ and $k\in \mathcal K$.  Clearly then $s_A(A)$ is similar to the diagonal matrix ${\rm diag} (s_A(\alpha_k))$.  Since we can add an arbitrary constant to $s_A$ we may assume as well that $s_A$ has distinct roots.  

Let now $p$ be a simplifying polynomial for $A$ with distinct roots and assume $\varphi$ is given  on $\sigma(A)$ as
$$
\varphi(z)= \sum_{j=1}^d \delta_j(z) f_j(p(z)).
$$
Denoting  $B=p(A)$ we  could then  define for $f_j \in C(\sigma(B))$ the matrix function $f_j(B)$ either by Lagrange interpolation at $p(\alpha_k)$ or  by assuming the similarity transformation to  the diagonal form $B=T DT^{-1}$  be given and setting $f_j(B)=T f_j(D)T^{-1}$, both yielding the same matrix $f_j(B)$ which commute with $A$.  Then  the following matrix is well defined:  
$$
\varphi(A) = \sum_{j=1}^d \delta_j(A) f_j(B).
$$
It follows immediately that if we have two functions $f, g \in C_\Lambda(\sigma(B))$, and we denote $\varphi=\mathcal L f$, $\psi= \mathcal L g$ and $\varphi \psi = \mathcal L (f \circledcirc  g)$, then this definition yields
$$
 (\varphi\psi)(A) =\varphi(A)  \psi(A).
$$   
 However, we formulate the   exact statement  using a different notation to underline the fact that knowing the values of $\varphi$   at the spectrum of $A$ need not  determine $f$  uniquely,  and hence not $\varphi(A) $, either. 
 
\begin{definition} Assume $p$ is a simplifyng polynomial for $A \in \mathbb M_n$ with disting roots $\Lambda$. Then we denote by $\chi_A$  the mapping $C_\Lambda(p(\sigma(A))) \rightarrow \mathbb M_n$ given by
\begin{equation}
f \  \mapsto \ \chi_A(f)=  \sum_{j=1}^d \delta_j(A) f_j(B).
\end{equation}

\end{definition}
\begin{theorem}
The mapping $\chi_A$ is a continuous homomorphism $C_\Lambda(p(\sigma(A))) \rightarrow \mathbb M_n.$
\end{theorem}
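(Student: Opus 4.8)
The plan is to show that $\chi_A$ is a homomorphism by reducing it to the already-established algebra homomorphism property of $\mathcal L$, and then to observe continuity from finite-dimensionality. The key observation is that the matrices $\delta_j(A)$ and $f_k(B)$, $B = p(A)$, all commute with one another: the $\delta_j(A)$ are polynomials in $A$, hence commute with $A$ and with any function of $B = p(A)$; and the $f_k(B)$ are all functions of the single (diagonalizable) matrix $B$, hence mutually commute. This commutativity is what lets the computation that proves Theorem~2.2 (the identity $\varphi\psi = \mathcal L(f\circledcirc g)$, via Lemma~\ref{tulonkonsistenssi}) be transported verbatim from scalar functions of $z$ to commuting matrices.

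Concretely, first I would record that $\chi_A$ is linear, which is immediate from linearity of $f \mapsto \sum_j \delta_j(A) f_j(B)$ in $f$, and that $\chi_A({\bf 1}) = \sum_j \delta_j(A) = I$ since $\sum_j \delta_j \equiv 1$ as polynomials. Next, for the multiplicativity $\chi_A(f \circledcirc g) = \chi_A(f)\chi_A(g)$, I would expand
$$
\chi_A(f)\chi_A(g) = \sum_{i,j} \delta_i(A)\delta_j(A)\, f_i(B) g_j(B),
$$
and then apply the two identities of Lemma~\ref{tulonkonsistenssi} with $z$ replaced by $A$ and $w = p(z)$ replaced by $B = p(A)$; these polynomial identities in $z$ hold on substitution of any matrix, in particular $A$. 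Because $f_i(B)$, $g_j(B)$ and all $\delta_k(A)$ commute, the reorganization of terms carried out in the proof of Theorem~2.2 goes through unchanged, yielding
$$
\chi_A(f)\chi_A(g) = \sum_{i} \delta_i(A)\,\bigl[ f_i(B) g_i(B) - B \sum_{j\neq i}\sigma_{ij}(f_i(B)-f_j(B))(g_i(B)-g_j(B))\bigr] = \chi_A(f\circledcirc g),
$$
the last equality being exactly the definition of the polyproduct evaluated via the functional calculus for $B$ (using that $f\mapsto f(B)$ is itself multiplicative on $C(\sigma(B))$, since $B$ is similar to normal). For continuity, since $\mathbb M_n$ is finite-dimensional it suffices to bound $\|\chi_A(f)\|$ by a constant times $\|f\|$: one has $\|\chi_A(f)\| \le \sum_j \|\delta_j(A)\|\, \|f_j(B)\|$, and $\|f_j(B)\| \le \kappa(T)\, |f_j|_M \le \kappa(T)\, |f|_M \le \kappa(T)\,\|f\|$ where $T$ is the similarity diagonalizing $B$ and $\kappa(T)$ its condition number, using $|f|_M \le \|f\|$ from Proposition~\ref{epayhtaloitanormeille}.

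The main obstacle — really the only point requiring care rather than routine bookkeeping — is justifying that the substitution $z \mapsto A$, $w \mapsto p(A)$ is legitimate in Lemma~\ref{tulonkonsistenssi} \emph{and} interacts correctly with the definition of $f_j(B)$: one must check that $f_j(B)$ defined by Lagrange interpolation at the points $p(\alpha_k)$ (equivalently via $T f_j(D) T^{-1}$) genuinely satisfies $(f\circledcirc g)_j$-type identities, i.e. that $w\mapsto f_j(B)$ behaves as a bona fide scalar functional calculus for $B$. This is safe because $B$ is diagonalizable with spectrum $p(\sigma(A))$, so $h \mapsto h(B)$ is an algebra homomorphism from $C(p(\sigma(A)))$ to $\mathbb M_n$; combined with the fact that the $\sigma_{ij}$, $\ell$, $L$ appearing in the polyproduct are scalars and $B$ commutes with all the $\delta_k(A)$, every step in the Theorem~2.2 computation is a valid identity in the commutative subalgebra of $\mathbb M_n$ generated by $A$. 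I would state this commutativity explicitly as the first sentence of the proof, then cite the proof of Theorem~2.2 for the algebraic manipulation and close with the two-line continuity estimate.
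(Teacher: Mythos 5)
Your proposal is correct and follows essentially the same route as the paper: the paper simply asserts that multiplicativity is "built in the construction" (i.e.\ the polynomial identities behind Lemma~2.6 transferred to the commutative subalgebra of polynomials in $A$, which you spell out explicitly), and its continuity estimate is exactly your bound $\|\chi_A(f)\| \le \sum_j \|\delta_j(A)\|\,\|f_j(B)\|$ with $\|f_j(B)\|\le \varkappa(T)\,|f|_{\sigma(p(A))} \le \varkappa(T)\,\|f\|$.
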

 
 \begin{proof}
 That $\chi_A$ is a homomorphism  is build in the construction and in particular we have
 $$
 \chi_A(f\circledcirc g)= \chi_A(f)  \ \chi_A(g).
 $$
The continuity of $\chi_A$ is seen from
$$
\|\chi_A(f)\| \le \sum_{j=1}^d \| \delta_j(A) \|  \   \  \| f_j(B)\|
$$  combined with
$
\| f_j(B) \| \le \varkappa(T) |f|_{\sigma(p(A))}
$
and with $|f|_{\sigma(p(A))} \le ||f||$, see Proposition \ref{epayhtaloitanormeille}.  Here $\varkappa(T) =  \|T\|  \ \|T^{-1}\|$ denotes the condition number   of the  diagonalizing similarity transformation.

\end{proof}
We can now also conclude that  we can formulate a spectral mapping theorem.
Let $M=p(\sigma(A))$ and, as it would likely to be the case,  $\sigma(A)$ is a proper subset of $p^{-1}(M)$.  Then it follows from Corollary \ref{tahanviitataan} that the spectrum of $[f] $ in $C_\Lambda(M) /\mathcal I_{\sigma(A)}$ is
$
\sigma([f])= \{ \hat f(z) \ :  \ z \in \sigma(A)\}.
$

\begin{theorem}
We have for $[f] \in C_\Lambda(p(\sigma(A))) /\mathcal I_{\sigma(A)}$ and $\chi_A(f) \in \mathbb M_n$
$$
\sigma(\chi_A(f))  =  \sigma([f]).
$$
\end{theorem}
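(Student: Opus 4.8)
The plan is to compute both sides explicitly by passing to the generalized eigenspace decomposition of $A$. Write $\mathbb C^n = \bigoplus_k V_k$ with $V_k = \ker (A-\alpha_k)^{n_k+1}$, so that $A|_{V_k} = \alpha_k I_{V_k} + N_k$ with $N_k$ nilpotent, and every $V_k$ is invariant under each polynomial in $A$. First I would check that $B=p(A)$ acts as a scalar on each $V_k$: indeed $\sigma(p(A)|_{V_k}) = p(\sigma(A|_{V_k})) = \{p(\alpha_k)\}$ by the spectral mapping theorem for polynomials, while $B$ is (similar to) a diagonal matrix, so its restriction to the invariant subspace $V_k$ is diagonalizable with a single eigenvalue, whence $B|_{V_k} = p(\alpha_k)I_{V_k}$.

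Next I would show $\chi_A(f)|_{V_k} = \hat f(\alpha_k) I_{V_k} + (\text{nilpotent})$. Since $\sigma(B)$ is a finite set, each continuous component $f_j$ agrees on $\sigma(B)$ with a polynomial, so $f_j(B)$ is that polynomial evaluated at $B$; in particular $\chi_A(f)=\sum_j\delta_j(A)f_j(B)$ is a polynomial in $A$, all the factors $\delta_j(A),f_j(B)$ commute, and each leaves $V_k$ invariant. On $V_k$ we have $f_j(B)|_{V_k} = f_j(p(\alpha_k))I_{V_k}$ (from $B|_{V_k}=p(\alpha_k)I_{V_k}$) and, by Taylor expansion, $\delta_j(A)|_{V_k} = \delta_j(\alpha_k)I_{V_k} + D_{j,k}$ with $D_{j,k}$ nilpotent. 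A commuting sum of products of matrices of the form ``scalar $+$ nilpotent'' is again of that form, so summing over $j$ and using $\sum_j \delta_j(\alpha_k) f_j(p(\alpha_k)) = \hat f(\alpha_k)$ gives the claimed shape on each block.

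The conclusion is then immediate. Because $\mathbb C^n=\bigoplus_k V_k$ with each $V_k$ invariant under $\chi_A(f)$, one gets $\sigma(\chi_A(f)) = \bigcup_k \sigma(\chi_A(f)|_{V_k}) = \bigcup_k\{\hat f(\alpha_k)\} = \{\hat f(z):z\in\sigma(A)\}$. On the other side, by Corollary~\ref{tahanviitataan} the character space of $C_\Lambda(p(\sigma(A)))/\mathcal I_{\sigma(A)}$ is identified with $\sigma(A)$, so Theorem~\ref{GELFAND}(i) (equivalently, the remark made just before the theorem) yields $\sigma([f]) = \{\hat f(z):z\in\sigma(A)\}$. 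Comparing the two descriptions gives $\sigma(\chi_A(f)) = \sigma([f])$.

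I expect the only genuine point requiring care to be the step $B|_{V_k}=p(\alpha_k)I_{V_k}$, i.e.\ making precise that a simplifying $p$ forces $p(A)$ to be scalar on each generalized eigenspace of $A$; everything afterwards is bookkeeping with commuting scalar-plus-nilpotent matrices. It is also worth stressing that, although $\chi_A(f)$ itself genuinely depends on the representative $f$ (its nilpotent part on $V_k$ does), its spectrum does not --- which is precisely the content of the theorem.
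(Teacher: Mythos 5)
Your argument is correct, but it takes a more hands-on route than the paper. The shared core is the observation that, since $\sigma(B)$ is finite, each $f_j(B)$ equals $q_j(B)$ for a polynomial $q_j$ interpolating $f_j$ on $\sigma(B)$, so $\chi_A(f)$ is a polynomial in $A$; and both arguments identify $\sigma([f])=\{\hat f(z):z\in\sigma(A)\}$ via Corollary~\ref{tahanviitataan}. The paper stops essentially there: it writes $\chi_A(f)=P(A)$ with $P(z)=\sum_j\delta_j(z)q_j(p(z))$, invokes the classical spectral mapping theorem for polynomials, and uses $P(\alpha_k)=\hat f(\alpha_k)$. You instead re-derive the needed spectral information by decomposing $\mathbb C^n$ into the generalized eigenspaces $V_k$, showing that a simplifying $p$ forces $B|_{V_k}=p(\alpha_k)I_{V_k}$ (restriction of a diagonalizable matrix to an invariant subspace is again diagonalizable and here has the single eigenvalue $p(\alpha_k)$), and checking block by block that $\chi_A(f)|_{V_k}=\hat f(\alpha_k)I_{V_k}$ plus a nilpotent commuting correction coming from $\delta_j(A)|_{V_k}$; all steps are sound. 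Your version is longer but buys an explicit structural picture, and your closing remark --- that the nilpotent part of $\chi_A(f)$ on $V_k$ depends on the representative $f$ while the spectrum does not --- is a nice complement to the statement. Note that both proofs are intrinsically finite-dimensional (yours via generalized eigenspaces, the paper's via interpolation on a finite spectrum), consistent with the paper's remark that this simple argument is unavailable for the Hilbert-space analogue proved later.
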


\begin{proof}

Even so the statement may look rather complicated  the proof here can  be reduced to the standard spectral mapping theorem for polynomials.  However, the statement holds as such  in more general setting and then  in particular the present simple proof is not available.

Consider $f_j(B)$ where $B=p(A)$ and denote by  $\beta_i$ the eigenvalues of $B$.  There are in general $s \le m$ different eigenvalues of $B$.   Let $q_j$  be the polynomial of degree $s-1$ such that 
\begin{equation}\label{lagra}
q_j (\beta_i)=f_j(\beta_i) \ \text { for } \ i=1, \cdots, s.
\end{equation}
Then we set $f_j(B)= q_j(B)$.   Thus we have
\begin{equation}\label{valmistulos}
\chi_A(f) = P(A)
\end{equation}
if we set $P(z)= \sum_{j=1}^d \delta_j(z)q_j(p(z))$.  The conclusion follows as $P$ is a polynomial.

\end{proof}

\begin{remark}
   There are two different steps to be taken when consructing $\chi_A(f)$.  

(i)    Given $A \in \mathbb M_n$ one could for example compute the Schur decomposition of $A$.  From there one must decide what diagonal elements are to be considered as the same and based on that one  chooses a  simplifying polynomial $p$  such that it has simple roots.  Notice in particular that then the eigenvalues $\alpha_k$ for which $n_k>0$, are distinct from the roots $\lambda_j$ of $p$.

(ii)  Given $f$ one then computes the Lagrange interpolating polynomials $q_j(w)$ satisfying  (\ref{lagra}) for each $j$.   

Then $\chi_A(f)$ is given by (\ref{valmistulos}).

\end{remark}
\begin{remark}
It is natural to ask how this approach is different from the  definition based on Hermite interpolation on the spectrum of $A$.  Consider the minimal polynomial $m_A$ as simplifying polynomial.  In the Hermite interpolation one interpolates at   the eigenvalues  while we   add a constant $c$  so that he polynomial $p(z)=m_A(z)+c$  has simple roots.   The effect on the differentiability requirement  on $f$  and/or $\varphi$   is then removed and replaced by a balanced limiting behavior of the roots of  $p$ near its critical points - and this  happens automatically, independent of the function $f$ as long as it is continuous.  To illustrate this, suppose $f$ is holomorphic and $\varphi = \mathcal L f$ so that 
$$
\varphi'(z)= \sum_{j=1}^d [\delta_j'(z) f_j(p(z)) + \delta_j(z) f_j'(p(z)) p'(z)] .
$$
However, at critical points $z_c$ we have $\varphi'(z_c)= \sum_{j=1}^d \delta_j'(z_c) f_j(p(z_c))$ so this value  does not depend on whether $f$ is differentiable at critical values  or not.  See also Example  \ref{perustaa}.

%TAAAAAAAAAAAS TAHAN  jatkoa hieman...
\end{remark}

%POLNORMAL

\subsection{Polynomially normal operators in   Hilbert spaces}

We shall now consider bounded operators $A$  in complex Hilbert spaces $H$. The  operator norm  of $A\in \mathcal B(H)$   is denoted by $\| A \|$.
 
\begin{definition}   We call $A\in \mathcal B(H)$ polynomially normal, if there exists a nonconstant monic polynomial $p$ such that $p(A)$ is normal.  The polynomial $p$ is then called a simplifying polynomial for $A$. \end{definition}

Polynomially normal operators have been discussed in [4], [7], as operator valued roots for polynomial equations $p(z)-N=0$ with $N$ normal.   We formulate a  structure result (see Theorem 3.1, in [7], also  Theorem 2 in [8] ). 

\begin{theorem}\label{rakenne}
Let $H$ be separable and   $A \in \mathcal B(H)$  such that $p(A)$ is normal for some nonconstant polynomial $p$.  Then there exist  reducing subspaces $\{H_n\}_{n=0}^\infty$ for $A$, such that
 $
 H=\oplus_{n=0}^\infty H_n
 $
 and $
  A_{| H_0}$  is  algebraic while   
 $ A_{| H_n}$ are  for $n\ge1$   similar to  normal.
 \end{theorem}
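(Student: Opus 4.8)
The plan is to exploit the spectral theorem for the normal operator $N=p(A)$ and to split $H$ according to how the roots of $p(\zeta)-w=0$ behave as $w$ ranges over $\sigma(N)$. First I would recall that since $N$ is normal there is a projection-valued spectral measure $E(\cdot)$ on $\sigma(N)\subset\mathbb C$ with $N=\int w\,dE(w)$, and that $A$ commutes with $N$, hence with every spectral projection $E(\omega)$. The key geometric fact is the one already used repeatedly in the paper: the roots $z_j(w)$ of $p(\zeta)=w$ are locally analytic away from the finitely many critical values $p(\Lambda_1)$, and they collide only at those critical values. So I would let $V\subset\sigma(N)$ be the (at most finite) set of critical values lying in $\sigma(N)$, set $H_0=E(V)H$ and $H'=E(\sigma(N)\setminus V)H$. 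Both are reducing for $N$ and, since $A$ commutes with the $E$'s, reducing for $A$ as well.

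On $H_0$: the restriction $N_0=N_{|H_0}$ is a normal operator whose spectrum is the finite set $V$, hence $N_0$ is diagonalizable with finitely many eigenvalues, and $H_0$ decomposes as a finite orthogonal sum of eigenspaces $H_0=\oplus_{w\in V}\ker(N_0-w)$. On each such eigenspace $A$ satisfies $(p(A)-w)|_{\ker(N_0-w)}=0$, i.e. $A$ restricted there is a root of the polynomial $p(\zeta)-w$, so $A_{|H_0}$ is annihilated by $\prod_{w\in V}(p(\zeta)-w)$ and is therefore algebraic. This gives the $H_0$ part of the statement.

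On $H'$: here I would cover $\sigma(N)\setminus V$ by countably many Borel pieces $\{M_n\}_{n\ge1}$, each of which is small enough that all $d$ roots $z_j(w)$ can be chosen as single-valued continuous (indeed analytic) functions on a neighborhood of $M_n$ (possible by compactness, after first removing the closed critical set — a standard covering argument). Put $H_n=E(M_n)H$, which is reducing for $A$. On $H_n$ I claim $A_{|H_n}$ is similar to normal: build the operator $U_n=\bigoplus_j$ of the functional calculus expressions; more concretely, use the $d$ branches $z_j$ to define $d$ "branch projections". The cleanest route is to observe that on $H_n$ the operator $\prod_{j}(A-z_j(N))=0$ by the spectral theorem applied to the (continuous, hence Borel) functions $z_j$, so the minimal-polynomial-type relation $\prod_j (A-z_j(N)) = 0$ holds with the $z_j(N)$ mutually commuting normal operators with disjoint spectra (the branches stay separated away from critical values). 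Then the Lagrange-type idempotents $\Delta_j(A,N)=\prod_{k\ne j}(A-z_k(N))(z_j(N)-z_k(N))^{-1}$ — the denominators being boundedly invertible precisely because the branches are separated on $M_n$ — are commuting idempotents summing to the identity, and $A_{|H_n} = \sum_j z_j(N)\Delta_j$. Each $z_j(N)$ is normal, and the $\Delta_j$ provide a (generally non-orthogonal but bounded, boundedly invertible) change of frame; conjugating by an operator that orthogonalizes the ranges of the $\Delta_j$ turns $A_{|H_n}$ into a direct sum of normal operators, hence similar to normal.

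The main obstacle I expect is the $H'$ step: one must be careful that the covering $\{M_n\}$ can indeed be chosen Borel, disjoint (or made disjoint after refinement), with a uniform separation of the branches and uniform bounds on the inverses $(z_j(N)-z_k(N))^{-1}$ on each piece, so that the idempotents $\Delta_j$ are genuinely bounded on each $H_n$ and the resulting similarity on $H_n$ is implemented by a bounded, boundedly invertible operator. Getting $E(M_n)H$ to be a reducing subspace is automatic from $[A,N]=0$, but verifying that the piecewise constructions assemble into a single orthogonal decomposition $H=\oplus H_n$ (with $H_0$ included) and that "similar to normal" survives the assembly — i.e. that one does not need a uniform bound across all $n$ — is the delicate bookkeeping; here one cites exactly the cited structure theorem (Theorem 3.1 of [7], Theorem 2 of [8]) rather than re-deriving it, so in the paper I would simply indicate the spectral-measure splitting above and refer to those sources for the details.
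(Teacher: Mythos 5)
The paper never proves Theorem \ref{rakenne}: it is quoted as a known structure result, with the proof delegated to Gilfeather [7, Theorem 3.1] and Kittaneh [8, Theorem 2], and the text explicitly says the subsequent development proceeds independently of it. So your sketch has to be judged against those sources, and in that comparison it is essentially the standard argument and is sound: decompose $\sigma(N)$, $N=p(A)$, by the spectral measure into the finite set of critical values of $p$ (this gives $H_0$, on which $A$ is annihilated by $\prod_{w}(p(\zeta)-w)$ and hence algebraic) and countably many Borel shells whose closures avoid the critical values (these give the $H_n$, $n\ge 1$); Fuglede's theorem makes every spectral subspace of $N$ reduce $A$, and no uniformity in $n$ is needed, exactly as you say. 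Two points in the $H'$ step deserve tightening. First, you do not need continuous or analytic branches on each piece: only the Borel functional calculus of $N$ is used, so a global Borel labelling of the roots suffices, and the identity $\prod_j\bigl(A-z_j(N)\bigr)=p(A)-N=0$ holds because every elementary symmetric function of the roots except the last one is constant (the same observation as in Lemma \ref{riippuuvaintuplaveesta}); what matters is only the uniform lower bound for $|z_j-z_k|$ on the closure of each shell, which makes the Lagrange idempotents $\Delta_j$ bounded. Second, the phrase ``conjugating by an operator that orthogonalizes the ranges of the $\Delta_j$'' is the one genuine soft spot, since normality is not preserved by an arbitrary similarity: you must take the similarity inside the commutant of $N$, e.g. $S=\bigl(\sum_j\Delta_j^{*}\Delta_j\bigr)^{1/2}$, note via Fuglede--Putnam that each $\Delta_j$, hence $\Delta_j^{*}$, hence $S$, commutes with $N$, $N^{*}$ and every $z_k(N)$, check that the $S\Delta_jS^{-1}$ are mutually orthogonal self-adjoint projections, and then verify directly that $SA_{|H_n}S^{-1}=\sum_j z_j(N)\,S\Delta_jS^{-1}$ is normal. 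With these repairs your outline is a correct self-contained proof; what it buys over the paper's bare citation is that it makes visible why the decomposition is indexed by the distance to the critical values of $p$, while the citation buys brevity and avoids exactly this bookkeeping.
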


We could take use of this structure result   but proceed independently of it.   We start by assuming that $p(A)$ is normal and then comment the  straightforward extension to the case where $p(A)$  is similar to normal.

Let $N=p(A)$ be normal, and as before,  we  may assume that $p$ has simple roots.   Then the first task is to define $f_j(N)$ in a consistent way. 
Recall  the following two results, see e.g.  [2].
\begin{lemma}
Let $M\subset \mathbb C$ be compact.  Then the closure of polynomials of the form
$q(w, \overline w)$
in the uniform norm over $M$ equals $C(M)$.
\end{lemma}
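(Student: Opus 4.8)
The plan is to recognize this as a direct application of the complex form of the Stone--Weierstrass theorem. Let $\mathcal{P}(M)$ denote the set of all functions on $M$ of the form $w \mapsto q(w,\overline w)$ with $q$ a polynomial in two complex variables; equivalently $\mathcal{P}(M)$ is the linear span, restricted to $M$, of the monomials $w^m \overline w^{\,n}$ for $m,n \ge 0$. I would then check, one by one, the hypotheses of the complex Stone--Weierstrass theorem for the subset $\mathcal{P}(M) \subset C(M)$.

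First, $\mathcal{P}(M)$ is a subalgebra of $C(M)$: it is a linear subspace by construction, it is closed under pointwise multiplication since the product of two polynomials in $w,\overline w$ is again such a polynomial, and it contains the constant function $1$, so it is unital. Second, $\mathcal{P}(M)$ is self-adjoint, i.e.\ closed under complex conjugation, because $\overline{w^m \overline w^{\,n}} = \overline w^{\,m} w^n$ is again of the same form, whence the conjugate of any finite linear combination (with the scalar coefficients conjugated) again lies in $\mathcal{P}(M)$. Third, $\mathcal{P}(M)$ separates the points of $M$: if $w_1 \ne w_2$ both lie in $M$, then the coordinate function $w \mapsto w$, which belongs to $\mathcal{P}(M)$, already takes distinct values at $w_1$ and $w_2$. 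Fourth, $\mathcal{P}(M)$ vanishes nowhere on $M$ since it contains $1$.

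With all four hypotheses verified, the complex Stone--Weierstrass theorem gives that $\mathcal{P}(M)$ is dense in $C(M)$ in the uniform norm over the compact set $M$, i.e.\ its closure equals $C(M)$, which is the claim; alternatively one may simply cite the statement from a standard reference such as [2]. I do not anticipate any genuine obstacle here, as every step is routine. The one point worth emphasising is that the appearance of $\overline w$ is essential: polynomials in $w$ alone form in general a proper closed subalgebra of $C(M)$ (the disc algebra when $M$ is the closed unit disc, say), precisely because that smaller family fails the self-adjointness requirement, and it is exactly this phenomenon that later forces the vector-valued, ``multicentric'' point of view rather than an analytic one.
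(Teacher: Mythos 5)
Your proof is correct and matches the paper's approach: the paper simply invokes the complex Stone--Weierstrass theorem (citing the standard reference [2]), and your verification of the subalgebra, self-adjointness, point-separation and non-vanishing hypotheses just spells out the routine details behind that citation.
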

Since $N$ commutes with $N^*$ the operator $q(N,N^*)$ is well defined and the following holds.

\begin{lemma}
If $N\in \mathcal B(H)$ is normal, then
$$
\| q(N,N^*)?\| = \max_{w \in \sigma(N)} | q(w,\overline w)|.
$$
\end{lemma}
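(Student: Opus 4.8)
The plan is to realise $q(N,N^*)$ as an element of a commutative $C^*$-algebra and then invoke the fact that on such an algebra the Gelfand transform is isometric. Since $N$ is normal, $N$ and $N^*$ commute, so the norm-closed unital subalgebra $\mathcal A\subset\mathcal B(H)$ generated by $N$, $N^*$ and the identity is a commutative $C^*$-algebra, and it contains $q(N,N^*)$. By the Gelfand--Naimark theorem the Gelfand transform $a\mapsto\hat a$ is an isometric $*$-isomorphism of $\mathcal A$ onto $C(\Sigma_{\mathcal A})$.

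The first concrete step is to identify the character space $\Sigma_{\mathcal A}$ with $\sigma(N)$. The evaluation $h\mapsto h(N)$ maps $\Sigma_{\mathcal A}$ continuously onto $\sigma_{\mathcal A}(N)$ by Theorem~\ref{GELFAND}(i), and it is injective because a character of a $C^*$-algebra respects the involution, so that $h(N^*)=\overline{h(N)}$ and $h$ is then determined on the generators by the single value $h(N)$; being a continuous bijection of compact Hausdorff spaces it is a homeomorphism. Moreover $\sigma_{\mathcal A}(N)=\sigma(N)$, by the permanence of the spectrum of a normal element under passage to a $C^*$-subalgebra. Under this identification $\hat N(w)=w$ and $\widehat{N^*}(w)=\overline w$ on $\sigma(N)$, so by linearity and multiplicativity of the Gelfand transform $\widehat{q(N,N^*)}(w)=q(w,\overline w)$. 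Combining this with the $C^*$-isometry of the Gelfand transform on $\mathcal A$ then gives
$$
\|q(N,N^*)\|=\big\|\,\widehat{q(N,N^*)}\,\big\|_\infty=\max_{w\in\sigma(N)}|q(w,\overline w)|,
$$
which is the assertion. (Alternatively, Theorem~\ref{GELFAND}(i) yields $\sigma\big(q(N,N^*)\big)=\{q(w,\overline w):w\in\sigma(N)\}$, and one then concludes using that the norm of a normal operator equals its spectral radius, the latter being a consequence of the $C^*$-identity applied repeatedly to the self-adjoint element $q(N,N^*)^*q(N,N^*)$.)

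The delicate half is the inequality ``$\le$'', i.e. that $q\mapsto q(N,N^*)$ does not enlarge the supremum norm over $\sigma(N)$; this is precisely where the $C^*$-structure (equivalently, the spectral theorem) is indispensable. A direct attempt --- approximate $1/(q(w,\overline w)-\mu)$ uniformly on $\sigma(N)$ by a polynomial in $w,\overline w$ via Stone--Weierstrass and run a Neumann series to invert $q(N,N^*)-\mu$ --- is circular, since it already presupposes the norm estimate being sought. The opposite inequality ``$\ge$'', by contrast, is elementary and needs no $C^*$-machinery: for $w\in\sigma(N)$ normality gives $\|(N-w)x\|=\|(N^*-\overline w)x\|$ for every $x\in H$, so a unit approximate eigenvector of $N$ for $w$ (one exists because for a normal operator the spectrum coincides with the approximate point spectrum) is at the same time an approximate eigenvector of $N^*$ for $\overline w$; applying $q(N,N^*)$ then shows $q(w,\overline w)$ lies in the (approximate point) spectrum of $q(N,N^*)$, whence $|q(w,\overline w)|\le\|q(N,N^*)\|$, and taking the maximum over $w\in\sigma(N)$ gives ``$\ge$''.
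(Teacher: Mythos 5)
Your argument is correct: it is the standard continuous-functional-calculus proof via the commutative $C^*$-algebra generated by $N$, $N^*$ and $I$, Gelfand--Naimark isometry, spectral permanence, and the identification of the character space with $\sigma(N)$. The paper gives no proof of this lemma at all --- it simply cites Conway's textbook --- and your reasoning is precisely the classical argument that citation stands for, so there is nothing to compare beyond noting that your ``$\ge$'' direction (approximate eigenvectors plus $\|(N-w)x\|=\|(N^*-\overline w)x\|$) is also the device the paper itself reuses later in its spectral mapping theorem for operators.
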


Given now a normal operator $N$ and a continuous function $f_j$ on $\sigma(N)$ one approximates  $f_j$ by a sequence $\{q_{j,n}\}$ such that
$$
| f_j -q_{j,n} |_\infty = \max_{w\in \sigma(N)} |f_j(w) - q_{j,n}(w, \overline w)| \rightarrow 0
$$
and sets
\begin{equation}
f_j(N)= \lim_{n\rightarrow \infty} q_{j,n} (N, N^*).
\end{equation}
Then $f_j(N) \in \mathcal B(H)$ is normal,  with  $\| f_j(N)?\| = |f_j|_\infty \le \|f\|.$

\begin{definition}\label{operaattoreillemaarit}
Assume $p$ is a simplifying polynomial for  $A\in \mathcal B(H)$  with distinct roots $\Lambda$,  so that $N=p(A)$ is normal.   Then we denote by $\chi_A$ the mapping $C_\Lambda(p(\sigma(A))) \rightarrow \mathcal B(H)$ given by
\begin{equation}
f \mapsto \chi_A(f)=\sum_{j=1} ^d \delta_j(A) f_j(N).
\end{equation}
\end{definition}
Note that  here $\delta_j(A)$ and $f_j(N)$ commute.  In fact,
$A$ commutes with $N=p(A)$ and since $N$ commutes with $N^*$  the operator $A$ commutes with $N^*$ as well, by Fuglede's theorem,  [2]. We  combine  the construction into  the following theorem.

\begin{theorem}
Let $A\in \mathcal B(H)$ and a simplifying polynomial $p$ be given as in Definition \ref{operaattoreillemaarit}.  Then the mapping $\chi_A$ is a continuous homomorphism from $C_\Lambda(p(\sigma(A)))$ to $\mathcal B(H)$.  In particular,
$$
\chi_A(f\circledcirc g)= \chi_A(f) \ \chi_A(g)
$$
and 
$$
\| \chi_A(f) \| \le   C   \| f\|  \  \text{ with }  \ C=\sum_{j=1}^d \| \delta_j(A)\|.
$$

\end{theorem}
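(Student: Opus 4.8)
The plan is to verify the two asserted properties separately, deriving the homomorphism property from the scalar-function identity behind the polyproduct and the norm estimate from the triangle inequality together with the normal functional calculus recalled just above.

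First I would establish the multiplicativity $\chi_A(f\circledcirc g)=\chi_A(f)\,\chi_A(g)$. The key observation is that everything reduces to the algebraic identities in Lemma~\ref{tulonkonsistenssi}. Writing $\chi_A(f)=\sum_i\delta_i(A)f_i(N)$ and $\chi_A(g)=\sum_j\delta_j(A)g_j(N)$, I would expand the product $\chi_A(f)\chi_A(g)=\sum_{i,j}\delta_i(A)\delta_j(A)f_i(N)g_j(N)$; since $\delta_i(A)$ and $f_j(N)$ commute (as noted right before Definition~\ref{operaattoreillemaarit}, using that $A$ commutes with $N^*$ by Fuglede's theorem), all four factors can be freely reordered. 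Now $\delta_i(A)\delta_j(A)=(\delta_i\delta_j)(A)$, and $\delta_i\delta_j$ is a polynomial, so Lemma~\ref{tulonkonsistenssi}, evaluated with $w$ replaced by $p$ (hence $p(z)=w$ becoming $N=p(A)$), gives $\delta_i(A)\delta_j(A)$ as a polynomial expression in the $\delta_k(A)$ with coefficients involving $N$. Substituting those expressions and collecting the coefficient of each $\delta_k(A)$ is exactly the same bookkeeping that appeared in the proof of the theorem $\varphi\psi=\mathcal L(f\circledcirc g)$; it produces $\sum_k\delta_k(A)\bigl[f_k(N)g_k(N)-N\sum_{j\neq k}\sigma_{kj}(f_k(N)-f_j(N))(g_k(N)-g_j(N))\bigr]$, and since the $f_j(N)$, $g_j(N)$, $N$ all commute and the bracket matches the $k$-th component of $f\circledcirc g$ under the functional calculus for the normal operator $N$, this is precisely $\chi_A(f\circledcirc g)$.

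For the norm estimate I would simply write $\|\chi_A(f)\|\le\sum_{j=1}^d\|\delta_j(A)\|\,\|f_j(N)\|$ by the triangle inequality and submultiplicativity, then use the fact recalled above that $f_j(N)$ is normal with $\|f_j(N)\|=|f_j|_{\sigma(N)}\le|f|_{M}$ where $M=p(\sigma(A))\supseteq\sigma(N)$, and finally $|f|_M\le\|f\|$ by Proposition~\ref{epayhtaloitanormeille}. This yields $\|\chi_A(f)\|\le\bigl(\sum_{j=1}^d\|\delta_j(A)\|\bigr)\|f\|=C\|f\|$ with $C=\sum_{j=1}^d\|\delta_j(A)\|$, and continuity follows. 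Linearity of $\chi_A$ is immediate from linearity of each $f\mapsto f_j$ and of the normal functional calculus in each component, and $\chi_A(\mathbf 1)=\sum_j\delta_j(A)=\mathrm{Id}$ since $\sum_j\delta_j\equiv1$.

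The main obstacle is the commutativity bookkeeping in the first step: one must be careful that $\|f_j(N)\|=|f_j|_{\sigma(N)}$ and that $q(N,N^*)$-type limits are used consistently, and above all that $f_j(N)g_k(N)=g_k(N)f_j(N)$ and both commute with $\delta_i(A)$ — this is where Fuglede's theorem is essential and is the one place where the argument genuinely uses that $N$ is normal rather than merely that $p(A)$ is algebraic. Once the commutativity is in place, the identity collapses to the already-proven scalar computation behind $\mathcal L(f\circledcirc g)=\mathcal Lf\,\mathcal Lg$, so no new calculation is needed; one only transplants it into $\mathcal B(H)$. (The extension to $p(A)$ similar to a normal $N=SN_0S^{-1}$ is routine: replace $f_j(N)$ by $S f_j(N_0)S^{-1}$, incurring an extra factor $\varkappa(S)=\|S\|\,\|S^{-1}\|$ in the constant $C$.)
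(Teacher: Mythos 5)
Your proposal is correct and follows essentially the same route as the paper: the multiplicativity is exactly what the paper means by the homomorphism being ``built into the construction'' (Lemma~\ref{tulonkonsistenssi} transplanted to $A$, with commutativity of $\delta_i(A)$ and $f_j(N)$ secured by Fuglede's theorem, and multiplicativity of the continuous functional calculus for the normal $N$), and the norm bound is the paper's own triangle-inequality argument with $\|f_j(N)\|=|f_j|_{\sigma(N)}\le\|f\|$. Your write-up merely makes explicit the bookkeeping the paper leaves implicit, so no discrepancy to report.
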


\begin{remark} {\bf The case of $p(A)$ similar to normal.}
We can  extend the construction above to operators  which are similar to polynomially normal ones.  In short,   assume that $A\in \mathcal B(H)$ is such that there exists a  polynomial $p$ and a bounded $T$ with bounded inverse,  such that
$
N= T^{-1} p(A) T
$ is normal.  Denote $V=T^{-1} AT$ so that $N=p(V)$ and $B=p(A)$.  Then we can define
$$
f_j(B) = Tf_j(N) T^{-1}
$$
and again $A$ commutes with $f_j(B)$ as $A f_j(B)=T [V f_j(p(V))]T^{-1}$.   This allows us to define
\begin{equation}\label{similaarimaaritelma}
\chi_A (f)= T \chi_V (f) T^{-1}
\end{equation}
and the extension shares all the natural properties.
\end{remark} 
 
 \begin{remark}{\bf Spectral measure.}
 Recall that if $N$ is normal, then there exists (see e.g. Section 12 in [14]) a spectral measure $E$ from the $\sigma$-algebra of all Borel sets of $\sigma(N)$ into $B(H)$ such that if $\varphi$ is an essentially bounded Borel-measurable function on $\sigma(N)$ then 
 $$
\varphi(N)= \int_{\sigma(N)} \varphi \ dE.
$$
This could in an obvious way be used in defining $f_j(p(A))$, thus extending the  functional calculus even further.
\end{remark}

%PUNAISTA::::::
%PUNAISTA::::::
%PUNAISTA::::::

\subsection{Spectral mapping theorem for operators}

If $A\in \mathcal B(H)$ is such  that    $p(A)$ is similar to normal, then we have $\chi_A(f)= T\chi_V (f)T^{-1}$  and  therefore $\chi_A(f)$ and $\chi_V(f)$ have the same spectrum.  Therefore  we may as well assume that $A$ is polynomially normal.

 \begin{theorem}

Suppose $p$ has simple zeros and  $A\in \mathcal B(H)$ is such that $p(A)$ is normal.  Then for all $ [f] \in C_\Lambda(p(\sigma(A))) / \mathcal I_{\sigma(A)}$ we have
$$
\sigma(\chi_A (f)) = \sigma( [f]). 
$$
\end{theorem}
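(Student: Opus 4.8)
The plan is to prove the two inclusions $\sigma([f])\subseteq\sigma(\chi_A(f))$ and $\sigma(\chi_A(f))\subseteq\sigma([f])$ separately. Write $N=p(A)$; by the polynomial spectral mapping theorem $\sigma(N)=p(\sigma(A))=M$, and by Corollary~\ref{tahanviitataan} $\sigma([f])=\{\hat f(z):z\in\sigma(A)\}=\hat f(\sigma(A))$, a compact set. Recall that $A$ commutes with $N$ and, by Fuglede's theorem, with $N^*$, so all of $\delta_j(A)$, $f_j(N)$, $\overline{f_j}(N)$ commute with one another.

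For $\sigma([f])\subseteq\sigma(\chi_A(f))$ I would argue with (approximate) eigenvectors. Fix $z_0\in\sigma(A)$ and set $w_0=p(z_0)\in\sigma(N)$. If $z_0$ lies in the approximate point spectrum $\sigma_{\mathrm{ap}}(A)$, pick unit vectors $x_n$ with $(A-z_0)x_n\to0$. Since $\delta_j$ is a polynomial, $\delta_j(A)x_n-\delta_j(z_0)x_n\to0$; since $p$ is a polynomial, $(N-w_0)x_n\to0$; and since $N$ is normal with $w_0\in\sigma(N)$ and $f_j\in C(\sigma(N))$, the concentration estimate
$$\|(f_j(N)-f_j(w_0))x_n\|^2=\int_{\sigma(N)}|f_j(w)-f_j(w_0)|^2\,d\langle E(w)x_n,x_n\rangle\ \longrightarrow\ 0,$$
$E$ the spectral measure of $N$, gives $f_j(N)x_n-f_j(w_0)x_n\to0$. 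Combining these and using commutativity yields $\chi_A(f)x_n-\hat f(z_0)x_n\to0$, so $\hat f(z_0)\in\sigma(\chi_A(f))$. If instead $z_0\notin\sigma_{\mathrm{ap}}(A)$, then $A-z_0$ is bounded below but not surjective, so some unit $y$ satisfies $(A^*-\overline{z_0})y=0$; then $N^*y=\overline{p}(A^*)y=\overline{w_0}\,y$, hence $Ny=w_0y$ (as $N$ is normal), so $\overline{f_j}(N)y=\overline{f_j(w_0)}\,y$ and $\overline{\delta_j}(A^*)y=\overline{\delta_j(z_0)}\,y$. As $\chi_A(f)^*=\sum_j\overline{\delta_j}(A^*)\,\overline{f_j}(N)$, this gives $\chi_A(f)^*y=\overline{\hat f(z_0)}\,y$, so $\hat f(z_0)\in\sigma(\chi_A(f))$ again. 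Either way $\hat f(\sigma(A))\subseteq\sigma(\chi_A(f))$.

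For $\sigma(\chi_A(f))\subseteq\sigma([f])$ I would pass to the commutative Banach subalgebra $\mathcal C\subseteq\mathcal B(H)$ generated by $A$ and the $C^*$-algebra $C^*(N)$ (commutative by Fuglede; it contains $A=\sum_j\lambda_j\delta_j(A)$ and each $f_j(N)$, hence $\chi_A(f)$). A character $\psi$ of $\mathcal C$ restricts on $C^*(N)\cong C(\sigma(N))$ to evaluation at some $w_0\in\sigma(N)$; with $z_0=\psi(A)$ one has $p(z_0)=\psi(p(A))=\psi(N)=w_0$, so $z_0\in p^{-1}(\sigma(N))$ and $\psi(\chi_A(f))=\sum_j\delta_j(z_0)f_j(w_0)=\hat f(z_0)$. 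Hence $\sigma_{\mathcal C}(\chi_A(f))=\hat f(\sigma_{\mathcal C}(A))$, and since $\mathcal C\subseteq\mathcal B(H)$ we get $\sigma(\chi_A(f))\subseteq\sigma_{\mathcal C}(\chi_A(f))=\hat f(\sigma_{\mathcal C}(A))$. It then remains to prove the lemma $\sigma_{\mathcal C}(A)=\sigma(A)$, i.e.\ that $\mathcal C$ is inverse-closed at $A$; combined with $\sigma([f])=\hat f(\sigma(A))$ and the first inclusion this finishes the theorem. For the nontrivial inclusion of the lemma, let $z_0\notin\sigma(A)$. If moreover $p(z_0)\notin\sigma(N)$, factor $p(\zeta)-p(z_0)=(\zeta-z_0)q(\zeta)$ to get $(A-z_0)q(A)=N-p(z_0)$, whence $(A-z_0)^{-1}=q(A)\,(N-p(z_0))^{-1}\in\mathcal C$. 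In the remaining case $p(z_0)\in\sigma(N)$ one uses that the spectral projections of $N$ reduce $A$, so $\mathcal C$ is fibred over $\sigma(N)$ with each fibre generated by an operator $A_w$ satisfying $p(A_w)=w$, hence algebraic of degree $\le d$; invertibility of $A-z_0$ is then fibrewise, and $z_0\notin\sigma(A)$ keeps $z_0$ off the spectrum of every fibre, so $(A-z_0)^{-1}$ exists fibrewise and glues into an element of $\mathcal C$.

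The approximate-eigenvector half is routine. The main obstacle is exactly the lemma $\sigma_{\mathcal C}(A)=\sigma(A)$ when $p(z_0)\in\sigma(N)$: there $N-p(z_0)$ is not invertible, the elementary factorization no longer suffices, and one has to descend into the fibre/direct-integral structure of $A$ over $\sigma(N)$ underlying Theorem~\ref{rakenne} and check that the minimal polynomials of the fibres of $A$ have all their roots in $\sigma(A)$ (the behaviour at points of $\sigma(N)$ that are not atoms of the spectral measure being governed by the essential support). An alternative for this inclusion is to approximate $f$ in $C_\Lambda(M)$ by functions with polynomial components — for which $\chi_A$ has fibres that are polynomials in the $A_w$ and the quantitative resolvent bound of Theorem~\ref{invertible} applies fibrewise — and then pass to the limit; this needs the same structural input to replace $p^{-1}(M)$ by $\sigma(A)$, which is where the real work lies.
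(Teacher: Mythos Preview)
Your argument for the inclusion $\hat f(\sigma(A))\subseteq\sigma(\chi_A(f))$ follows the same plan as the paper: split according to whether $z_0\in\sigma_{\mathrm{ap}}(A)$ or $\overline{z_0}\in\sigma_p(A^*)$, and push (approximate) eigenvectors through. The only difference is in how you treat $f_j(N)x_n\to f_j(w_0)x_n$. You use the spectral measure of $N$ directly; the paper instead first establishes the inclusion for $f$ of the special form $f_j(w)=q_j(w,\overline w)$ (so that only the algebraic facts $(N-w_0)x_n\to0$ and $(N^*-\overline{w_0})x_n\to0$ are needed, the latter coming from normality) and then passes to arbitrary continuous $f$ by density of such polynomials in $C_\Lambda(M)$ together with upper semicontinuity of the spectrum in $\mathcal B(H)$. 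Your version is a little more direct; the paper's is more elementary in that it never invokes the spectral measure.

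For the reverse inclusion you are working much harder than necessary, and you end with an acknowledged gap. You pass to an auxiliary commutative subalgebra $\mathcal C\subset\mathcal B(H)$, reduce everything to the lemma $\sigma_{\mathcal C}(A)=\sigma(A)$, and then only sketch a fibre/direct-integral argument for the case $p(z_0)\in\sigma(N)$, invoking Theorem~\ref{rakenne}. The paper bypasses all of this. It uses nothing beyond the fact that $\chi_A$ is an algebra homomorphism: if $\hat f$ does not vanish on $\sigma(A)$, then by Corollary~\ref{tahanviitataan} the class $[f]$ is invertible in $C_\Lambda(M)/\mathcal I_{\sigma(A)}$, so one picks $g$ with $[g]=[f]^{-1}$ and concludes $\chi_A(f)\chi_A(g)=I$ from the homomorphism property. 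That is the whole argument; no auxiliary algebra $\mathcal C$, no control of $\sigma_{\mathcal C}(A)$, no structural decomposition of $A$. The point is that the Banach algebra $C_\Lambda(M)$ and its quotient were built precisely so that this step becomes immediate, and your route does not exploit that.
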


\begin{proof}
Recall that $\sigma([f])= \{\hat f(z) \ : \ z\in \sigma(A)\}.$  Consider first the inclusion
\begin{equation}\label{vaseninkl}
\hat f(z)  \in \sigma(\chi_A (f))  \  \text{ for all } z\in \sigma(A).
\end{equation}
where $f$  is of  the form 
\begin{equation}\label{erityismuoto}
f_j(w)= q_j(w, \overline w).
\end{equation}
 
 %jatka...

 We take a $\lambda \in \sigma(A)$ and need to show that $\hat f(\lambda) \in \sigma(\chi_A (f))$.  The discussion splits into two as to whether
 \begin{equation}\label{aprox}
 \lambda \in \sigma_{ap}(A),
 \end{equation}
 or, if that is not the case, then necessarily,
 
 \begin{equation}\label{sittenpoint}
 \overline \lambda \in \sigma_p(A^*).
 \end{equation}
 Since $p(A)$ is normal we have in both cases $p(\lambda) \in \sigma_{ap}(p(A))$.
 
 Assuming (\ref{aprox}) there exists a sequence of unit vectors $x_n$ such that
 \begin{equation}\label{tahanviittaus}
 (A-\lambda)x_n \rightarrow 0
 \end{equation}
which by writing $p(A)-p(\lambda)=q(A,\lambda)(A-\lambda)$ implies immediately that
$$
(p(A)-p(\lambda))x_n \rightarrow 0.
$$
But then also 
$$
(p(A)-p(\lambda))^*x_n \rightarrow 0.
$$
In fact, if $N$ is normal and $Ny_n \rightarrow0$, then 
$$
(Ny_n, Ny_n) =  (N^*y_n, N^*y_n) \rightarrow 0.
$$
Denoting $p(A)=N$ and $p(\lambda)=\nu$ we have
\begin{align}
&\chi_A (f) -\hat f(\lambda)=\\
&\sum_{j=1}^d \delta_j(A) [ Q_j(N,N^*)-Q_j(\nu, \overline \nu)]
  +\\
  & \sum_{j=1}[\delta_j(A)-\delta_j(\lambda)]Q_j(\nu, \overline \nu).
 \end{align}
Operating with these at $x_n$ we have
$$
[Q_j(N,N^*)-Q_j(\nu,\overline \nu)]x_n \rightarrow 0
$$
since both $(N-\nu)x_n$ and $(N^*-\overline  \nu)x_n$ tend to 0. In fact,  there are polynomials $R, S$ of three variables such that we can write
$$
Q(N,N^*)-Q(\nu,\overline \nu)= [Q(N,N^*)-Q(\nu,N^*)]+[Q(\nu,N^*)-Q(\nu,\overline \nu)]
$$
$$
= R(N,\nu,N^*)(N-\nu) + S(\nu, N^*, \overline \nu)(N^*-\overline \nu)
$$

 Likewise, by (\ref{tahanviittaus}),
$
  [\delta_j(A)-\delta_j(\lambda)]Q_j(\nu, \overline \nu) x_n  \rightarrow 0,
$
 and so 
$
\hat f(\lambda) \in \sigma_{ap}(\chi_A (f)).
$

Next, assume that $\overline \lambda \in \sigma_p(A^*)$ and suppose $x$ is an eigenvector such that
 $$
A^*x = \overline\lambda x.
$$
Then clearly 
$$\{[\delta_j(A)-\delta_j(\lambda)]Q_j(\nu, \overline \nu)\}^* x=0.$$  However, we also have
$$
Q_j(N,N^*)^*x= \overline{Q_j(\nu, \overline\nu)}x
$$
since from $A^*x=\overline \lambda x$ we conclude $p(A)^*x = \overline{p(\lambda)}x$ and so $N=p(A)$ being normal this implies $Nx= \nu x$ as well.
Substituting these into $\chi_A (f)^*- \overline{\hat f(\lambda)}$ gives
$$
[\chi_A (f)^*- \overline{\hat f(\lambda)}]x=0.
$$
Hence $ \overline{\hat f(\lambda)} \in \sigma_p(\chi_A (f)^*)$ and so $\hat f(\lambda) \in \sigma(\chi_A (f))$.

%jatkuuu
We still need to show (\ref{vaseninkl}) when $\hat f$ is not of the form (\ref{erityismuoto}).  To that end assume that  $\hat f_n$ approximates $\hat f$ uniformly in $\sigma(A)$ where $\hat f_n$ is of the special form  (\ref{erityismuoto}).  

Take $\mu \in \hat f(\sigma(A))$ and we need to show that $\mu \in \sigma(\chi_A(f))$.  For some $\lambda\in \sigma(A)$ we thus have $\mu=\hat f(\lambda)$.
Let $\{\hat f_n\}$ be an approximative  sequence of the special form (\ref{erityismuoto})   such that in particular
$$
\sup_{z\in \sigma(A)} |\hat f(z)-\hat f_n(z)| \rightarrow 0
$$
and hence also
$$
\chi_A(f) = \lim_n \chi_A(f_n).
$$
Fix an arbitrary open set $V$ such that $\sigma(\chi_A(f)) \subset V$.  We show that $\mu \in V$ which completes the argument.  Fix an open set $U$ such that
$$
\sigma(\chi_A(f)) \subset U \subset {\rm cl}(U) \subset V.
$$
Since the spectrum is upper semicontinuous (e.g. Theorem 3.4.2 in [1]) there exists an $\varepsilon>0$ such that
$$
\sigma(B) \subset U   \  \text { whenever } \  \|\chi_A(f)-B\|<\varepsilon.
$$
Let $n_\varepsilon$ be such that $\|\chi_A(f)-\chi_A(f_n)\|< \varepsilon$  for all $n \ge n_\varepsilon.$  Then $\sigma(\chi_A(f_n)) \subset U$.  But for $\hat f_n$ we then have
$$
\hat f_n(\lambda) \in \sigma(\chi_A(f_n))\subset U.
$$
Finally,  from $\hat f_n(\lambda) \rightarrow \hat f(\lambda)$ we conclude that
$$
\mu= \hat f(\lambda) \in {\rm cl}(U) \subset V.
$$

\bigskip

Consider now the  other direction.  Here the conclusion follows easily  already from   Corollary \ref{tahanviitataan} with $K_0=\sigma(A)$. In fact, suppose $ \hat f(z)\not=0$ for $z\in \sigma(A).$    Then there exists $g \in C_\Lambda(\sigma(p(A)))$ such that 
$$
\hat f(z) \hat g(z)=1 \ \text{ for } z\in \sigma(A).
$$

By  Theorem \ref{peruslause} we then know that $[g]$ is the inverse of $[f]$ and since $\chi_A$ is a homomorphism from $C_\lambda(\sigma(p(A)))/\mathcal I_{\sigma(A)}$ to $\mathcal B(H)$, we have $$
\chi_A(f) \chi_A(g)=I
$$
and $0\notin \sigma(\chi_A(f))$.  
   Thus, if $\mu \in \sigma(\chi_A(f))$, then there must exist $\lambda \in \sigma(A)$ such that $\hat f(\lambda)-\mu =0$.  But this simply means that $\sigma(\chi_A(f)) \subset \hat f(\sigma(A))$.

\end{proof}

{ \bf References}
\smallskip

[1]  B.Aupetit, A Primer on Spectral Theory, Springer  1991

 [2] John B. Conway, A Course in Functional Analysis, Second Edition, Springer   (1990)
 
  [3] Eberhard Kaniuth,  A Course in Commutative Banach Algebras, Springer, 2009
  
  [4]  S. Foguel, Algebraic functions of normal operators, Israel J. Math., 6 (1968), 199-201
  
  [5]   Theodore W. Gamelin, Uniform Algebras,  Prentice-Hall, Inc,1969
  
   [6] F.R. Gantmacher, The Theory of Matrices, Volume one. AMS,  1959
   
   [7]  Frank Gilfeather, Operator valued roots of Abelian analytic functions,
 Pacific J. of  Mathematics,  Vol. 55, No. 1, 1974,  127- 148
 
  [8]  Fuad Kittaneh, On the structure of polynomially normal operators, Bull. Austr. Math. Soc. Vol. 30 (1984), 11-18
  
  [9] O. Nevanlinna, Multicentric Holomorphic Calculus, Computational Methods and Function Theory,  June 2012, Volume 12, Issue 1, pp 45-65
  
  [10] O.Nevanlinna,  Lemniscates and K-spectral sets, J. Funct. Anal. 262 (2012), 1728-1741
  
  [11]  Nicholas J. Higham,  Functions of matrices: theory and computation.  SIAM
2008

[12]  T. Ransford, Potentail Theory  in the Complex Plane, London Math. Soc. Student Texts {\bf 28}, Cambridge Univ. Press, 1995

[13]  Ch.E.Rickart, General Theory of Banach Algebras, D.Van Nostrand Company, inc. 1960

[14]   Walter Rudin, Functional Analysis, McGraw-Hill, 1973

\bigskip
{\bf Acknowledgement} 
\smallskip

Much of this work was written  during  2014 while the author was a Visiting Fellow at Clare Hall, University of Cambridge.  The visit was partially supported by The Osk. Huttunen Foundation.

\end{document}